\documentclass{article}
\usepackage{amsmath}
\usepackage{amsfonts}
\usepackage{amsthm}
\usepackage{enumerate}

\newtheorem{thm}{Theorem}
\newtheorem{prop}[thm]{Proposition}
\newtheorem{lem}[thm]{Lemma}
\newtheorem{cor}[thm]{Corollary}

\theoremstyle{definition}
\newtheorem{defn}[thm]{Definition}
\newtheorem{rmk}[thm]{Remark}

\newcommand{\sqbinom}[2]{\big[\begin{smallmatrix} #1 \\ #2 \end{smallmatrix}\big]}

\begin{document}

\title{The Combinatorics of Higher Derivatives of Implicit Functions}

\author{Shaul Zemel}

\maketitle

\section*{Introduction}

Several formulae for derivatives, in various settings, are taught to first year undergraduates: Leibniz's rule for differentiating products, the chain rule for the derivative of a composition of functions, the derivative of an inverse function, and the derivative of an implicit function. A natural question arising from these formulae is whether one can obtain explicit expressions for derivatives of higher orders in these settings. In the first case the answer is given in terms of the generalized Leibniz rule, involving just simple binomial coefficients, as in the usual Binomial Theorem. A further generalization, in which one differentiates the product of more than two functions of the same variable, produces a similar analogue of the Multinomial Theorem.

The answer in the second case, for the derivative of some order $n$ of the composition $z=z\big(y(x)\big)$ with respect to $x$, requires partitions of $n$. A partition $\lambda$ of $n$, denoted by $\lambda \vdash n$, is usually represented by a decreasing sequence of positive integers $a_{q}$, $1 \leq q \leq p$, with $\sum_{q=1}^{p}a_{q}=n$. However, an alternative description is given in terms of multiplicities: The \emph{multiplicity} $m_{j}$ of $j$ in $\lambda$ is the (non-negative) number $\big|\{1 \leq q \leq p|a_{q}=j\}\big|$ counting how many times the integer $j$ appears in the partition $\lambda$, and we have $\sum_{j=1}^{t}jm_{j}=n$. The \emph{length} $\ell(\lambda)$ is just the maximal index $p$, which also equals $\sum_{j=1}^{t}m_{j}$. By using $f^{(r)}(t)$ for the $r$th derivative of $f$ with respect to $t$, the Fa\`{a} di Bruno formula reads
\[\frac{d^{n}z}{dx^{n}}=\sum_{\lambda=\{m_{j}\}_{j=1}^{n} \vdash n}n!z^{(\ell(\lambda))}\big(y(x)\big)\prod_{j=1}^{n}\frac{y^{(j)}(x)^{m_{j}}}{j!^{m_{j}}m_{j}!}.\] Note that while the derivatives themselves can be described in terms of the sequence $\{a_{q}\}_{q=1}^{p}$ as $z^{(q)}\big(y(x)\big)$ and $\prod_{p=1}^{q}y^{(a_{p})}(x)$, the combinatorial coefficient $n!\big/\prod_{j=1}^{n}j!^{m_{j}}m_{j}!$ associated with $\lambda$ requires the presentation with the multiplicities. For the history of Fa\`{a} di Bruno's formula, as well as several of its proofs, see \cite{[J1]} and the references therein.

The third case is implicitly solved by the Lagrange Inversion Formula, but it seems that a closed expression, not involving power series, has appeared for the first time in \cite{[J2]} (see, on the other hand, the historical discussion in that paper and its references). If $x=g(y)$ and $n\geq2$ then the expression is
\[\frac{d^{n}}{dx^{n}}g^{-1}(x)=\sum_{u=1}^{n-1}\sum_{\{\mu_{j}\}_{j\geq2} \in M_{n,u}}\frac{(n+u-1)!(-1)^{u}}{\prod_{j}j!^{\mu_{j}}\mu_{j}!}\frac{\prod_{j}g^{(j)}(y)^{\mu_{j}}}{g'(y)^{n+u}},\] where $M_{n,u}$ is the set of sequences $\{\mu_{j}\}_{j\geq2}$ of non-negative integers that satisfy the equality $\sum_{j}\mu_{j}=u$ and $\sum_{j}j\mu_{j}=u+n-1$, namely the set of partitions of the number $n+u-1$ as the sum of $u$ numbers that are at least 2. This explains why $\mu_{j}$ can be non-zero only for $j \leq n$, and also why only $1 \leq u \leq n-1$ is allowed, though for the excluded case $n=1$ we do have a single term with $u=0$. The combinatorial coefficient is the number of possibilities of dividing $n+u-1$ marked balls into $u$ identical boxes according to the partition $\{\mu_{j}\}_{j}$.

The fourth question was partially addressed in \cite{[Wo]}, whose formula is not entirely explicit, and a full answer was established much later. The first reference known to the author to give a closed formula for the general derivative of an implicit function, based on some variant of the Bell polynomials, is \cite{[C1]}, essentially using the Lagrange Inversion Formula. The problem is briefly discussed on page 153 of \cite{[C2]}, where the Lagrange Inversion Formula and its extensions are still the basis of the discussion (this is also the case in most of the references cited there, except \cite{[Wo]}). A more explicit expression appears in \cite{[CF]}, though the formula in that reference contains an error (mainly in the argument for counting the number of elements appearing in the formula), and this mistake is corrected in the (comparably very recent) reference \cite{[Wi]}. The latter reference uses an integration argument combined with the Fa\`{a} di Bruno formula, but also gives an inductive proof, resembling our proof in spirit. We also mention \cite{[N]}, which determines the terms appearing in such a derivative, but without the exact coefficients. The pre-print \cite{[J3]} gives the full answer as well, but with the coefficients appearing as combinatorial descriptions rather than closed formulae. The case $m=r=1$ in \cite{[STZ]} essentially evaluates higher derivatives of implicit functions as well (using their Taylor expansions with $x_{0}=y_{0}=0$), in terms of sums over certain trees. For another approach see \cite{[S]}, which based on some ideas of \cite{[Y]} provides formulae that are related to ours, though from a different point of view.

In this paper we give another type of a closed formula for the $n$th derivative $y^{(n)}(x)$ with $n\geq2$, when $y$ is the function of $x$ given implicitly via the equation $f(x,y)=0$. The formula that we prove is based not on the most elementary products of the various partial derivatives of $f$, but on products of certain binomial combinations. This means that the sum describing $y^{(n)}$ contains substantially less terms, and the coefficients have interesting combinatorial meaning, similar to those from \cite{[Wi]} and \cite{[J3]}. In the end we show how to deduce the formula appearing in those references from ours. The formula for higher derivatives of inverse functions, from \cite{[J2]} and others, follows as a special case.

It is also known that several of the formulae mentioned above have algebraic phenomena lying behind them. For example, the formula of Fa\`{a} di Bruno was shown to be related to a special type of Hopf algebra (see, e.g., \cite{[FGV]} and some of the references therein), and \cite{[S]} describes some of the algebra behind the Lagrange Inversion Formula. It is an interesting question whether an object of similar flavor can be related to our formulae.

This paper is divided into 4 sections. Section \ref{Blocks} defines the binomial combinations that are used throughout the paper. Section \ref{FirstDesc} determines the products of these combinations that show up in our main formula, while in Section \ref{DetCoeff} we determine the combinatorial coefficient with which every such expression appears. Finally, Section \ref{Expansion} explains where these combinations come from, as well as deduces the formula of \cite{[J3]} from ours.

I would like to thank W. P. Johnson for referring me to \cite{[N]}, for sharing the details of \cite{[J3]} with me, and for interesting discussions around this subject, as well as to T. Schlank for referring me to \cite{[STZ]}. Many thanks are due to the referee, for making suggestions for improving the manuscript, as well as for introducing me to the references \cite{[Wi]}, \cite{[C1]}, \cite{[C2]}, and \cite{[CF]}.

\section{The Basic Building Blocks \label{Blocks}}

Let $f$ be a function of two variables that is continuously differentiable with respect to both variables, and let $x_{0}$ and $y_{0}$ be such that $f(x_{0},y_{0})=0$ and $f_{y}(x_{0},y_{0})\neq0$. Here and throughout an index $x$ or $y$ of a function $g$ means its partial derivative with respect to that variable, so that the non-vanishing expression is $\frac{\partial f}{\partial y}(x_{0},y_{0})$. It is taught in every basic course in calculus that in this case the equation $f(x,y)=0$ determines $y$ as a differentiable function of $x$ in the neighborhood of $x_{0}$ (with $y(x_{0})=y_{0}$), and the derivative $y'(x_{0})$ is $-\frac{f_{x}(x_{0},y_{0})}{f_{y}(x_{0},y_{0})}$. From now on we shall omit the arguments $x_{0}$ and $y_{0}$, so that the latter equality is written more succinctly as $y'=-\frac{f_{x}}{f_{y}}$. Assuming that $f$ has all the derivatives of sufficiently high order, we aim to find a formula for the (higher) derivatives of $y$, namely $y^{(n)}=\frac{d^{n}y}{dx^{n}}$ with $n\geq2$.

Recall that when $g$ is a function of $x$ and $y$, the derivative of the function sending $x$ to $g\big(x,y(x)\big)$ is $g_{x}+g_{y}y'=g_{x}-\frac{g_{y}f_{x}}{f_{y}}$. In particular we get
\begin{equation}
\frac{d}{dx}f_{x}=\frac{f_{xx}f_{y}-f_{yx}f_{x}}{f_{y}}\qquad\mathrm{and}\qquad\frac{d}{dx}f_{y}=\frac{f_{xy}f_{y}-f_{yy}f_{x}}{f_{y}}, \label{ddx1stder}
\end{equation}
and we can evaluate $y''=-\frac{d}{d_{x}}\frac{f_{x}}{f_{y}}$, either using Equation \eqref{ddx1stder} or using the expansion $-\big(\frac{f_{x}}{f_{y}}\big)_{x}+\big(\frac{f_{x}}{f_{y}}\big)_{y}\frac{f_{x}}{f_{y}}$, as
\begin{equation}
\frac{-f_{xx}f_{y}+f_{yx}f_{x}}{f_{y}^{2}}+\frac{f_{xy}f_{y}-f_{yy}f_{x}}{f_{y}^{2}}\cdot\frac{f_{x}}{f_{y}}=\frac{-f_{xx}f_{y}^{2}+2f_{xy}f_{x}f_{y}-f_{yy}f_{x}^{2}}{f_{y}^{3}}. \label{yder2}
\end{equation}
The expressions $f_{xx}$, $f_{yy}$, and $f_{xy}=f_{yx}$ are, of course, the appropriate second derivatives of $f$, and derivatives of $f$ of higher order are denoted by additional indices. Note that we may use the symmetry of mixed derivatives since we always assume that $f$ is continuously differentiable enough times. We shall abbreviate the symbol $g_{x...xy...y}=\frac{\partial^{p+r}g}{\partial x^{p}\partial y^{r}}$ to simply $g_{x^{p}y^{r}}$ for any (smooth enough) function $g$ of $x$ and $y$. The undergraduate formula for $y'$ and Equation \eqref{yder2} for $y''$ also begin to identify the pattern, in which the denominator $y^{(n)}$ is $f_{y}^{2n-1}$ (note the similarity with the case of inverse functions, considered in \cite{[J2]}). We shall therefore be using the following lemma.
\begin{lem}
The expression $f_{y}^{2n+1}y^{(n+1)}$ can be evaluated as \[f_{y}^{2}\tfrac{d}{dx}\big(f_{y}^{2n-1}y^{(n)}\big)-(2n-1)(f_{xy}f_{y}-f_{yy}f_{x})\big(f_{y}^{2n-1}y^{(n)}\big).\] \label{denfree}
\end{lem}

\begin{proof}
Just take the derivative of $f_{y}^{2n-1}\big(x,y(x)\big)y^{(n)}$, apply the chain rule, multiply by $f_{y}^{2}$, evaluate $f_{y}\cdot\frac{d}{dx}f_{y}$ via Equation \eqref{ddx1stder}, and move the resulting expressions to the appropriate sides. This proves the lemma.
\end{proof}

Let us evaluate $f_{y}^{5}y'''$ via Lemma \ref{denfree}, where we recall from Equation \eqref{yder2} that the value of $f_{y}^{3}y''$ is $-f_{xx}f_{y}^{2}+2f_{xy}f_{x}f_{y}-f_{yy}f_{x}^{2}$. After expanding $\frac{d}{dx}f_{x}$ and $\frac{d}{dx}f_{y}$ as in Equation \eqref{ddx1stder}, the parts of $\frac{d}{dx}(f_{y}^{3}y'')$ arising from differentiating the first derivatives of $f$ (i.e., those involving $f_{xx}f_{xy}f_{y}^{2}$, $f_{xx}f_{yy}f_{x}f_{y}$, $f_{xy}^{2}f_{x}f_{y}$, and $f_{xy}f_{yy}f_{x}^{2}$) all cancel. Evaluating the remaining parts and adding the other term from Lemma \ref{denfree} shows that $f_{y}^{5}y'''$ is the sum of the expressions
\begin{equation}
-f_{xxx}f_{y}^{4}+3f_{xxy}f_{x}f_{y}^{3}-3f_{xyy}f_{x}^{2}f_{y}^{2}+f_{yyy}f_{x}^{3}f_{y} \label{yder3D3}
\end{equation}
and
\begin{equation}
3(f_{xy}f_{y}-f_{yy}f_{x})(f_{xx}f_{y}^{2}-2f_{xy}f_{x}f_{y}+f_{yy}f_{x}^{2}). \label{yder3rest}
\end{equation}
Equation \eqref{yder3rest} can be expanded, as in the explicit expressions appearing in \cite{[N]} and \cite{[J3]}, as $3f_{xx}f_{xy}f_{y}^{3}-3f_{xx}f_{yy}f_{x}f_{y}^{2}-6f_{xy}^{2}f_{x}f_{y}^{2}+9f_{xy}f_{yy}f_{x}^{2}f_{y}-3f_{yy}^{2}f_{x}^{3}$, but from our point of view the product in that equation will be more useful.

In any case, both our expression for $f_{y}^{3}y''$ and the first term in the formula for $f_{y}^{5}y'''$ involve some kind of combinatorial sum. We therefore make these combinatorial sums the ``basic building blocks'' for our formulae.
\begin{defn}
For a smooth enough function $g$ of the two variables $x$ and $y$ and an integer $l$ we define $\Delta_{l}g=\sum_{j=0}^{l}(-1)^{j}\binom{l}{j}g_{x^{l-j}y^{j}} \cdot f_{x}^{j}f_{y}^{l-j}$. \label{Deltalg}
\end{defn}
Using the expressions from Definition \ref{Deltalg} we find that the formula for $f_{y}^{3}y''$ in Equation \eqref{yder2} is just $-\Delta_{2}f$, and the expressions for the parts of $f_{y}^{5}y'''$ that appear in Equations \eqref{yder3D3} and \eqref{yder3rest} are $-f_{y}\Delta_{3}f$ and $+3\Delta_{1}f_{y}\cdot\Delta_{2}f$ respectively. Note that in our notation the index always precedes the $\Delta$-sign, i.e., the symbol $\Delta_{1}f_{y}$ should be understood as $\Delta_{1}(f_{y})$, and \emph{not} as $(\Delta_{1}f)_{y}$. One motivation for working with the symbols $\Delta_{l}g$ from Definition \ref{Deltalg} is that our formula for $f_{y}\cdot\frac{d}{dx}g\big(x,y(x)\big)$ yields just $\Delta_{1}g$, so that the expressions from Equation \eqref{ddx1stder} are $\Delta_{1}f_{x}$ and $\Delta_{1}f_{y}$ respectively, and we distinguish the latter from $(\Delta_{1}f)_{y}$ since an immediate calculation (resulting from the very definition of $y=y(x)$ as the value of $y$ for which $f\big(x,y(x)\big)=0$) shows that $\Delta_{1}f=0$.

\section{The Expressions Appearing in $f_{y}^{2n-1}y^{(n)}$ \label{FirstDesc}}

Since we work with the construction blocks from Definition \ref{Deltalg}, but going from $f_{y}^{2n-1}y^{(n)}$ to $f_{y}^{2n+1}y^{(n+1)}$ involves differentiation in Lemma \ref{denfree}, we have to evaluate the derivative of the expression from Definition \ref{Deltalg}. Recall again that the full derivative with respect to $x$ is of the expression $\Delta_{l}g\big(x,y(x)\big)$, where $y(x)$ is defined via $f(x,y)=0$ and therefore $y'=-\frac{f_{x}}{f_{y}}$.
\begin{lem}
$f_{y}^{2}\frac{d}{dx}\Delta_{l}g$ equals $f_{y}\Delta_{l+1}g+l\Delta_{1}f_{y}\cdot\Delta_{l}g-l\Delta_{2}f\cdot\Delta_{l-1}g_{y}$. \label{derDelta}
\end{lem}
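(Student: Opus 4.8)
The plan is to reduce the identity to a couple of applications of Pascal's rule. Introduce the first-order operator $D$ on functions of $x$ and $y$ by $Dh=f_{y}h_{x}-f_{x}h_{y}$; since $y'=-f_{x}/f_{y}$, for any smooth $h$ one has $f_{y}\frac{d}{dx}h\big(x,y(x)\big)=Dh$, and therefore $f_{y}^{2}\frac{d}{dx}\Delta_{l}g=f_{y}\cdot D(\Delta_{l}g)$. Note that $\Delta_{1}h=Dh$ straight from Definition \ref{Deltalg}, so in particular $\Delta_{1}f_{y}=Df_{y}$ is the expression $f_{yx}f_{y}-f_{yy}f_{x}$ from Lemma \ref{denfree}. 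I would also record once and for all the auxiliary recursion $\Delta_{m}h=f_{y}\Delta_{m-1}(h_{x})-f_{x}\Delta_{m-1}(h_{y})$, valid for all $m\geq1$ and all smooth $h$: expanding both sides via Definition \ref{Deltalg} and comparing the coefficient of $f_{x}^{k}f_{y}^{m-k}h_{x^{m-k}y^{k}}$ on the two sides is exactly Pascal's rule $\binom{m}{k}=\binom{m-1}{k}+\binom{m-1}{k-1}$ (up to the sign $(-1)^{k}$). Applying this with $m=2$, $h=f$ identifies $\Delta_{2}f=f_{y}\,Df_{x}-f_{x}\,Df_{y}$, the product from Equation \eqref{yder3rest}.

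Now the computation itself. Apply $D$ to $\Delta_{l}g=\sum_{j=0}^{l}(-1)^{j}\binom{l}{j}g_{x^{l-j}y^{j}}f_{x}^{j}f_{y}^{l-j}$ by the Leibniz rule, separating in each summand the term in which $D$ differentiates the factor $g_{x^{l-j}y^{j}}$ from the term in which it differentiates $f_{x}^{j}f_{y}^{l-j}$. The first group is $\sum_{j}(-1)^{j}\binom{l}{j}f_{x}^{j}f_{y}^{l-j}\,Dg_{x^{l-j}y^{j}}=f_{y}\Delta_{l}(g_{x})-f_{x}\Delta_{l}(g_{y})$, which equals $\Delta_{l+1}g$ by the auxiliary recursion. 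For the second group use $D(f_{x}^{j}f_{y}^{l-j})=jf_{x}^{j-1}f_{y}^{l-j}(Df_{x})+(l-j)f_{x}^{j}f_{y}^{l-j-1}(Df_{y})$; the identities $j\binom{l}{j}=l\binom{l-1}{j-1}$ and $(l-j)\binom{l}{j}=l\binom{l-1}{j}$ let one reindex each resulting sum, and the $(Df_{x})$-part collapses to $-l(Df_{x})\Delta_{l-1}(g_{y})$ while the $(Df_{y})$-part collapses to $l(Df_{y})\Delta_{l-1}(g_{x})$. Multiplying through by $f_{y}$ gives
\[
f_{y}^{2}\frac{d}{dx}\Delta_{l}g=f_{y}\Delta_{l+1}g+l(Df_{y})\cdot f_{y}\Delta_{l-1}(g_{x})-l(Df_{x})\cdot f_{y}\Delta_{l-1}(g_{y}).
\]
Finally, invoke the auxiliary recursion once more, now in the form $f_{y}\Delta_{l-1}(g_{x})=\Delta_{l}g+f_{x}\Delta_{l-1}(g_{y})$, to rewrite the middle term; the two $\Delta_{l-1}(g_{y})$ contributions then combine into $-l\big(f_{y}\,Df_{x}-f_{x}\,Df_{y}\big)\Delta_{l-1}(g_{y})=-l\Delta_{2}f\cdot\Delta_{l-1}(g_{y})$, and using $Df_{y}=\Delta_{1}f_{y}$ this is exactly the claimed formula.

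There is no genuine obstacle here; the whole argument is bookkeeping. The one point that needs care is tracking, in each summand, which of the three factors $g_{x^{l-j}y^{j}}$, $f_{x}^{j}$, $f_{y}^{l-j}$ receives the derivative, and — crucially — resisting the urge to expand $\Delta_{l+1}g$, $\Delta_{l}g$ and $\Delta_{l-1}(g_{y})$ back into monomials: the proof only closes cleanly because the two Pascal collapses (once to recognize $\Delta_{l+1}g$, once to turn $f_{y}\Delta_{l-1}(g_{x})$ into $\Delta_{l}g+f_{x}\Delta_{l-1}(g_{y})$) are carried out rather than unwound. One should also note that the boundary indices $j=0$ and $j=l$ automatically contribute no $(Df_{x})$- and no $(Df_{y})$-term respectively, thanks to the factors $j$ and $l-j$, and that when $l=1$ the terms involving $\Delta_{l-1}(g_{y})=\Delta_{0}(g_{y})=g_{y}$ already behave as they should.
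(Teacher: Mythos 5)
Your proof is correct; every step checks out, including the boundary cases $l=0,1$ where the factors $l$ and $l-j$ kill the problematic terms. It follows the same Leibniz-plus-Pascal skeleton as the paper, but it is organized differently in a way worth noting. The paper differentiates each monomial $g_{x^{l-j}y^{j}}f_{x}^{j}f_{y}^{l-j}$ directly and then regroups the second-derivative terms by splitting the coefficient $l-j$ as $l$ minus $j$: the ``$l$''-pieces assemble into $\Delta_{1}f_{y}\cdot f_{x}^{j}f_{y}^{l-j}$ and the ``$j$''-pieces assemble into the full trinomial $\Delta_{2}f=f_{xx}f_{y}^{2}-2f_{xy}f_{x}f_{y}+f_{yy}f_{x}^{2}$ on the spot, so that the three asserted terms appear at once and no auxiliary identity is needed. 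You instead factor everything through the operator $Dh=f_{y}h_{x}-f_{x}h_{y}$ and the recursion $\Delta_{m}h=f_{y}\Delta_{m-1}(h_{x})-f_{x}\Delta_{m-1}(h_{y})$ --- precisely the identity the paper states after this lemma and explicitly declines to use --- invoking it once to recognize $\Delta_{l+1}g$, once to trade $f_{y}\Delta_{l-1}(g_{x})$ for $\Delta_{l}g+f_{x}\Delta_{l-1}(g_{y})$, and once (with $m=2$, $h=f$) to reconstitute $\Delta_{2}f$ from $f_{y}Df_{x}-f_{x}Df_{y}$. What your route buys is conceptual economy: all Pascal manipulations are quarantined inside one reusable recursion, and the natural intermediate grouping by $Df_{x}$ versus $Df_{y}$ (i.e.\ by which of $f_{x}^{j}$, $f_{y}^{l-j}$ is hit) is arguably the first thing one would write down. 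What the paper's route buys is that it stays entirely inside Definition \ref{Deltalg} and produces the three terms in their final form without the extra algebraic shuffle at the end; it also makes visible why the middle term is $\Delta_{1}f_{y}\cdot\Delta_{l}g$ on the nose (the $l$-pieces carry exactly the weights $(-1)^{j}\binom{l}{j}f_{x}^{j}f_{y}^{l-j}$ of $\Delta_{l}g$), whereas in your version this only emerges after the substitution for $f_{y}\Delta_{l-1}(g_{x})$.
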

Recall again that the relevant multiplier in the last term is $\Delta_{l-1}(g_{y})$ (and not $(\Delta_{l-1}g)_{y}$). Note that when $l=0$ the symbol $\Delta_{-1}$ is not required, because of the (vanishing) multiplier $l$.

\begin{proof}
Leibniz' rule shows that letting $f_{y}^{2}\frac{d}{dx}$ act on each term in Definition \ref{Deltalg} produces 3 elements: One in which the derivative operates on the derivative of $g$, one where it acts on the power of $f_{y}$, and one where it differentiates the power of $f_{x}$. The $j$th term of the first type becomes
\begin{equation}
\textstyle{(-1)^{j}\binom{l}{j}\big[g_{x^{l-j+1}y^{j}} \cdot f_{x}^{j}f_{y}^{l-j+2}-g_{x^{l-j}y^{j+1}} \cdot f_{x}^{j+1}f_{y}^{l-j+1}\big]}, \label{jderg}
\end{equation}
and if we replace the index $j$ by $j-1$ in the rightmost terms in Equation \eqref{jderg} the sum over $j$ becomes $\sum_{j=0}^{l+1}(-1)^{j}g_{x^{l-j+1}y^{j}}\big[\binom{l}{j}+\binom{l}{j-1}\big]f_{x}^{j}f_{y}^{l-j+2}$. As the latter combinatorial coefficient is just $\binom{l+1}{j}$ (by the classical binomial identity), this produces the first asserted term by Definition \ref{Deltalg}. In the remaining terms the summands with index $j$ combine to $(-1)^{j}\binom{l}{j}g_{x^{l-j}y^{j}}$ times
\begin{equation}
j(f_{xx}f_{x}^{j-1}f_{y}^{l-j+2}-f_{xy}f_{x}^{j}f_{y}^{l-j+1})+(l-j)(f_{yx}f_{x}^{j}f_{y}^{l-j+1}-f_{yy}f_{x}^{j+1}f_{y}^{l-j}). \label{jderf}
\end{equation}
The terms in Equation \eqref{jderf} that are multiplied by $l$ reduce to $\Delta_{1}f_{y} \cdot f_{x}^{j}f_{y}^{l-j}$, so that multiplying by the external coefficient and summing over $j$ produces the second required term by Definition \ref{Deltalg}. In the terms that are multiplied by $j$ in that equation we may of course assume that $j\geq1$, and after taking out a multiplying coefficient of $jf_{x}^{j-1}f_{y}^{l-j}$, one easily observes (using Definition \ref{Deltalg} again) that the remaining multipliers reduce to $\Delta_{2}f$. Putting in the external coefficient again, recalling that $j\binom{l}{j}=l\binom{l-1}{j-1}$, and taking the sign into account, we find that after replacing $j$ by $j+1$ these terms give $l\Delta_{2}f$ times $-\sum_{j=0}^{l-1}(-1)^{j}\binom{l-1}{j}g_{x^{l-1-j}y^{j+1}}f_{x}^{j}f_{y}^{l-1-j}$. As $g_{x^{l-1-j}y^{j+1}}$ equals $(g_{y})_{x^{l-1-j}y^{j}}$, this indeed produces the remaining desired term via Definition \ref{Deltalg}. This completes the proof of the lemma.
\end{proof}
We recall that the case $l=0$ in Lemma \ref{derDelta} is just the identity $\frac{d}{dx}g=g_{x}-\frac{g_{y}f_{x}}{f_{y}}$ from above, multiplied by $f_{y}^{2}$. We have also implicitly shown, in the proof of that lemma, that $\Delta_{2}f$ can be obtained as $f_{y}\Delta_{1}f_{x}-f_{x}\Delta_{1}f_{y}$. This is a special case of a more general formula, stating that $\Delta_{l+1}g=f_{y}\Delta_{l}g_{x}-f_{x}\Delta_{l}g_{y}$ for every $g$ and $l$. We can prove this identity by the usual binomial argument, but we shall not do so since we do not make use of this identity. Note that the cancellation that we used in proving the formula for $f_{y}^{5}y'''$ in Equations \eqref{yder3D3} and \eqref{yder3rest} amounts to the observation that when $g=f$ and $l=2$ the terms $\Delta_{1}f_{y}\cdot\Delta_{l}g$ and $\Delta_{2}f\cdot\Delta_{l-1}g_{y}$ from Lemma \ref{derDelta} coincide, and $f_{y}^{2}\frac{d}{dx}\Delta_{2}f$ is just $f_{y}\Delta_{3}f$. This special situation occurs in no other setting, and may hence be a bit misleading at first sight. In this particular case the terms $-f_{y}\Delta_{3}f$ and $+3\Delta_{1}f_{y}\cdot\Delta_{2}f$ of $f_{y}^{5}y'''$, appearing in Equations \eqref{yder3D3} and \eqref{yder3rest} respectively, correspond to the two terms in Lemma \ref{denfree} with $l=2$, because of the simple form of $f_{y}^{2}\frac{d}{dx}\Delta_{2}f$.

Let us apply Lemmas \ref{denfree} and \ref{derDelta} with the expression $-f_{y}\Delta_{3}f+3\Delta_{1}f_{y}\cdot\Delta_{2}f$ for $f_{y}^{5}y'''$ for evaluating $f_{y}^{7}y^{(4)}$. Applying Leibniz' rule for the differentiation in Lemma \ref{denfree}, we get from Lemma \ref{derDelta} that this part of $f_{y}^{7}y^{(4)}$ is the sum of $-f_{y}(f_{y}\Delta_{4}f+3\Delta_{1}f_{y}\cdot\Delta_{3}f-3\Delta_{2}f_{y}\cdot\Delta_{2}f)$, the term $-f_{y}\Delta_{1}f_{y}\cdot\Delta_{3}f$ arising from $g=f_{y}$ and $l=0$ in Lemma \ref{derDelta}, the term $+3\Delta_{1}f_{y} \cdot f_{y}\Delta_{3}f$ (using the cancellation in $f_{y}^{2}\frac{d}{dx}\Delta_{2}f$), and $+3\Delta_{2}f\big(f_{y}\Delta_{2}f_{y}+(\Delta_{1}f_{y})^{2}-f_{yy}\Delta_{2}f\big)$. Subtracting the other term from Lemma \ref{denfree} with $n=3$, namely $5\Delta_{1}f_{y}(-f_{y}\Delta_{3}f+3\Delta_{1}f_{y}\cdot\Delta_{2}f)$, and gathering similar terms, we find that $f_{y}^{7}y^{(4)}$ equals
\begin{equation}
-f_{y}^{2}\Delta_{4}f+4f_{y}\Delta_{1}f_{y}\cdot\Delta_{3}f+6f_{y}\Delta_{2}f_{y}\cdot\Delta_{2}f-3f_{yy}(\Delta_{2}f)^{2}-12(\Delta_{1}f_{y})^{2}\cdot\Delta_{2}f. \label{yder4}
\end{equation}

We can now characterize, up to numerical constants that we shall determine later, the terms appearing in $f_{y}^{2n-1}y^{(n)}$. Recall that for $n=2$ we only had a multiple of $\Delta_{2}f$, which is a single multiplier with no external derivative with respect to $y$ and an index 2 in the $\Delta$-symbol. For $n=3$ we had two products of two such symbols each, in each of which we had one $f$ and one $f_{y}$, and the sum of the $\Delta$-indices is 3. Considering the five terms in Equation \eqref{yder4}, we see that each of them is a product of three expressions, and in each product there is a total of 2 $y$-indices and the $\Delta$-indices sum to 4. We can thus state and prove the following result.
\begin{prop}
The expressions appearing in $f_{y}^{2n-1}y^{(n)}$ with $n\geq2$ are all products of $n-1$ terms of the form $\Delta_{l_{i}}f_{y^{r_{i}}}$, where in each such expression we have $\sum_{i=1}^{n-1}l_{i}=n$ and $\sum_{i=1}^{n-1}r_{i}=n-2$, and such that the terms $f$ (with $l=r=0$) and $\Delta_{1}f$ (in which $l=1$ and $r=0$) are not allowed. \label{formnoden}
\end{prop}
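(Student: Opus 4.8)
The plan is to argue by induction on $n\geq2$, using Lemma \ref{denfree} to pass from $y^{(n)}$ to $y^{(n+1)}$ and Lemma \ref{derDelta} to handle the derivatives of the building blocks that this introduces. The base case $n=2$ is Equation \eqref{yder2}, which by Definition \ref{Deltalg} reads $f_{y}^{3}y''=-\Delta_{2}f$: a single factor with $l_{1}=2=n$ and $r_{1}=0=n-2$, and $\Delta_{2}f$ is neither $f$ nor $\Delta_{1}f$. One notational convention makes everything fit together: I read $f_{y}$ itself as $\Delta_{0}f_{y}$ (legitimate since $\Delta_{0}g=g$), which is an allowed factor with $l=0$ and $r=1$; this is, for instance, why the term $-f_{y}^{2}\Delta_{4}f$ in Equation \eqref{yder4} counts as a product of $n-1=3$ factors.

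For the inductive step, suppose $f_{y}^{2n-1}y^{(n)}$ is a linear combination of products $\prod_{i=1}^{n-1}\Delta_{l_{i}}f_{y^{r_{i}}}$ of the asserted shape. Since $f_{y}\cdot\frac{d}{dx}f_{y}=\Delta_{1}f_{y}$, Lemma \ref{denfree} writes $f_{y}^{2n+1}y^{(n+1)}$ as $f_{y}^{2}\frac{d}{dx}\big(f_{y}^{2n-1}y^{(n)}\big)$ minus $(2n-1)\Delta_{1}f_{y}\cdot\big(f_{y}^{2n-1}y^{(n)}\big)$. In the second summand every product from the inductive hypothesis simply gains the factor $\Delta_{1}f_{y}$ (allowed; $l=1$, $r=1$), becoming a product of $n$ factors with $\sum l_{i}=n+1$ and $\sum r_{i}=n-1$ — precisely the description required for $y^{(n+1)}$.

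For the first summand I would use Leibniz' rule to reduce to computing $f_{y}^{2}\frac{d}{dx}\Delta_{l_{i}}f_{y^{r_{i}}}$ for each factor of each product, and then invoke Lemma \ref{derDelta} with $g=f_{y^{r_{i}}}$ and $l=l_{i}$, which rewrites this as the sum of $f_{y}\Delta_{l_{i}+1}f_{y^{r_{i}}}$, of $l_{i}\Delta_{1}f_{y}\cdot\Delta_{l_{i}}f_{y^{r_{i}}}$, and of $-l_{i}\Delta_{2}f\cdot\Delta_{l_{i}-1}f_{y^{r_{i}+1}}$ (the last appearing only when $l_{i}\geq1$). A direct bookkeeping check then shows that in each of the three resulting families the number of factors rises from $n-1$ to $n$ while $\sum l$ and $\sum r$ each rise by $1$, landing at $n+1$ and $n-1$: in the first family the factor $\Delta_{l_{i}}f_{y^{r_{i}}}$ is traded for the pair $\Delta_{0}f_{y},\ \Delta_{l_{i}+1}f_{y^{r_{i}}}$; in the second it is kept and $\Delta_{1}f_{y}$ is adjoined; in the third it is traded for the pair $\Delta_{2}f,\ \Delta_{l_{i}-1}f_{y^{r_{i}+1}}$.

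The only step that genuinely needs care — and hence the one I expect to be the main obstacle — is checking that no newly created factor is a forbidden $f=\Delta_{0}f$ or $\Delta_{1}f$. The factors $\Delta_{0}f_{y}$, $\Delta_{1}f_{y}$ and $\Delta_{2}f$ are plainly allowed. The factor $\Delta_{l_{i}+1}f_{y^{r_{i}}}$ has $\Delta$-index $l_{i}+1\geq1$, so it is never $f$, and it equals $\Delta_{1}f$ only when $l_{i}=r_{i}=0$, i.e.\ only when $\Delta_{l_{i}}f_{y^{r_{i}}}$ was the factor $f$ — excluded by the inductive hypothesis. The factor $\Delta_{l_{i}-1}f_{y^{r_{i}+1}}$ has $y$-index $r_{i}+1\geq1$, so it is neither $f$ nor $\Delta_{1}f$. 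This closes the induction. Note that the statement only restricts which monomials in the $\Delta$-blocks can occur, so there is nothing to verify about coefficients; in particular the accidental cancellation seen in passing from $f_{y}^{3}y''$ to $f_{y}^{5}y'''$ causes no difficulty here.
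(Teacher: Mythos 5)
Your proof is correct and follows essentially the same route as the paper's: induction via Lemma \ref{denfree}, with Leibniz' rule and the three terms of Lemma \ref{derDelta} tracking how the number of factors and the sums $\sum l_{i}$, $\sum r_{i}$ each change, plus the check that no forbidden factor $f$ or $\Delta_{1}f$ is created. Your explicit verification that $\Delta_{l_{i}+1}f_{y^{r_{i}}}$ can only degenerate to $\Delta_{1}f$ when the original factor was the already-excluded $f$ is a welcome bit of added care over the paper's terser assertion.
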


\begin{proof}
We have seen that the assertion holds for $n=2$ (as well as for some additional values of $n$), and arguing by induction we assume that it is true for $n$ and consider it for $n+1$, via Lemma \ref{denfree}. In the second term there, involving $\Delta_{1}f_{y} \cdot f_{y}^{2n-1}y^{(n)}$, we just add a single multiplier with $l=r=1$ (which is not one of the two excluded pairs), so that the resulting terms are indeed products of $n$ terms with the sum over the $l_{i}$'s (resp. the $r_{i}$'s) being $n+1$ (resp. $n-1$). For the first term we apply Leibniz' rule again, replacing a single term $\Delta_{l_{i}}f_{y^{r_{i}}}$ by the combination from Lemma \ref{derDelta}, and leaving the other terms invariant. The second term in that lemma again expresses simple multiplication by $\Delta_{1}f_{y}$, and we have treated this case already. In the first term we replace $l_{i}$ by $l_{i}+1$ and we have an extra term $f_{y}$ with $l=0$ and $r=1$, so that again we have one multiplier more and the sum of both the $l_{j}$'s and the $r_{j}$'s increase by 1. In the third term we replace $l_{i}$ by $l_{i}-1$ and $r_{i}$ by $r_{i}+1$, and multiply by $\Delta_{2}f$, with $l=2$ and $r=0$. As this operation also has the same effect on the number of multipliers and on the two sums, and as none of these operations introduce any of the disallowed expressions, this completes the proof of the proposition.
\end{proof}

Note that the induction step from the proof of Proposition \ref{formnoden} produces only products involving either $f_{y}$ (with $l=0$ and $r=1$), $\Delta_{1}f_{y}$ (in which $l=r=1$), or $\Delta_{2}f$ (having $l=2$ and $r=0$), but the assertion of that proposition does not mention that the products must involve one of these expressions. However, this is not an additional requirement, as we now see.
\begin{prop}
Let $\sqbinom{l_{i}}{r_{i}}$ be $n-1$ vectors with non-negative integral entries summing to $\sqbinom{n}{n-2}$ such that none of them equals $\sqbinom{0}{0}$ or $\sqbinom{1}{0}$. Then there exists some $i$ such that $\sqbinom{l_{i}}{r_{i}}$ is either $\sqbinom{0}{1}$, $\sqbinom{1}{1}$, or $\sqbinom{2}{0}$. \label{genrec}
\end{prop}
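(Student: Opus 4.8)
The plan is to argue by contradiction, via a short counting (pigeonhole) argument applied to the statistic $l_i+r_i$. First I would extract the two numerical consequences of the hypothesis: since the $n-1$ vectors $\binom{l_i}{r_i}$ sum to $\binom{n}{n-2}$, we have $\sum_{i=1}^{n-1}l_i=n$ and $\sum_{i=1}^{n-1}r_i=n-2$, and hence $\sum_{i=1}^{n-1}(l_i+r_i)=2n-2=2(n-1)$, i.e.\ exactly twice the number of vectors.

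Next I would classify all vectors $\binom{l}{r}$ with non-negative integer entries by the value of $l+r$: the only one with $l+r=0$ is $\binom{0}{0}$; the ones with $l+r=1$ are $\binom{1}{0}$ and $\binom{0}{1}$; the ones with $l+r=2$ are $\binom{2}{0}$, $\binom{1}{1}$, and $\binom{0}{2}$. Now suppose, toward a contradiction, that no $\binom{l_i}{r_i}$ equals $\binom{0}{1}$, $\binom{1}{1}$, or $\binom{2}{0}$. Combining this with the standing hypothesis that none of the $\binom{l_i}{r_i}$ is $\binom{0}{0}$ or $\binom{1}{0}$, the classification shows that the only admissible vector with $l_i+r_i\le2$ is $\binom{0}{2}$; in particular every one of our vectors satisfies $l_i+r_i\ge2$.

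The count then closes the argument: there are $n-1$ vectors, each contributing at least $2$ to $\sum_i(l_i+r_i)$, but that sum equals exactly $2(n-1)$, so $l_i+r_i=2$ for every $i$, and therefore by the classification $\binom{l_i}{r_i}=\binom{0}{2}$ for all $i$. This forces $\sum_i l_i=0$, contradicting $\sum_i l_i=n\ge2$. Hence some $\binom{l_i}{r_i}$ must be $\binom{0}{1}$, $\binom{1}{1}$, or $\binom{2}{0}$.

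I do not expect a genuine obstacle: once the right statistic is spotted — namely $l_i+r_i$, whose total is pinned at twice the number of vectors by the constraint sums — the rest is a one-line pigeonhole. The only points to handle with a bit of care are verifying that the list of admissible ``small'' vectors (those with $l_i+r_i\le 2$) is exactly $\{\binom{0}{2}\}$ under the contradiction hypothesis, and noting that the degenerate case $n=2$ is already covered, since then there is a single vector, necessarily equal to $\binom{2}{0}$.
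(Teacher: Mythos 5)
Your proof is correct and follows essentially the same route as the paper: both arguments use the statistic $l_i+r_i$, whose total $2n-2$ equals twice the number of vectors, to force $l_i+r_i=2$ for all $i$ once the small vectors are excluded, and then rule out the all-$\binom{0}{2}$ configuration via $\sum_i l_i=n$. The only cosmetic difference is that you frame the whole argument as a contradiction, whereas the paper first disposes of the case where some vector equals $\binom{0}{1}$ and then concludes directly.
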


\begin{proof}
If $\sqbinom{l_{i}}{r_{i}}=\sqbinom{0}{1}$ for some $i$ then there is nothing to prove, so we assume that this is not the case. As $\sqbinom{0}{0}$ or $\sqbinom{1}{0}$ are also excluded, we find that $l_{i}+r_{i}\geq2$ for every $1 \leq i \leq n-1$. But then $\sum_{i=1}^{n-1}(l_{i}+r_{i})\geq\sum_{i=1}^{n-1}2=2n-2$, and as we have $\sum_{i=1}^{n-1}l_{i}=n$ and $\sum_{i=1}^{n-1}r_{i}=n-2$, we deduce that the equality $l_{i}+r_{i}=2$ must hold for every such $i$. But since the situation in which $\sqbinom{l_{i}}{r_{i}}=\sqbinom{0}{2}$ for every $i$ is impossible (since then the sum is $\sqbinom{0}{2n-2}$ and not $\sqbinom{n}{n-2}$), it follows that at least one of the vectors $\sqbinom{l_{i}}{r_{i}}$ must be one of the other two vectors with sum 2, namely $\sqbinom{1}{1}$ or $\sqbinom{2}{0}$. This proves the proposition.
\end{proof}
For $n=2$ the only possible vector is $\sqbinom{2}{0}$, and for $n=3$ the only allowed sums producing $\sqbinom{3}{1}$ are $\sqbinom{3}{0}+\sqbinom{0}{1}$ and $\sqbinom{2}{0}$+$\sqbinom{1}{1}$ (up to order). The unordered possible sums for $n=4$, giving $\sqbinom{4}{2}$, are easily verified to be precisely $\sqbinom{4}{0}+\sqbinom{0}{1}+\sqbinom{0}{1}$, $\sqbinom{3}{0}+\sqbinom{1}{1}+\sqbinom{0}{1}$, $\sqbinom{2}{1}+\sqbinom{2}{0}+\sqbinom{0}{1}$, $\sqbinom{2}{0}+\sqbinom{2}{0}+\sqbinom{0}{2}$, and $\sqbinom{2}{0}+\sqbinom{1}{1}+\sqbinom{1}{1}$. The expressions for the associated derivatives appearing in Equations \eqref{yder2}, \eqref{yder3D3}, \eqref{yder3rest}, and \eqref{yder4} show that in these cases all the associated terms indeed appear. At this stage we cannot yet say though, in spite of Proposition \ref{genrec}, that the terms in $f_{y}^{2n-1}y^{(n)}$ are in one-to-one correspondence with the products satisfying these conditions, since we do not yet know if the coefficient with which a specific product should appear in that expression vanishes (Theorem \ref{coeffval} below will show though that this is never the case).

It will later be more convenient to have an expression for the derivative $y^{(n)}$ alone, which is slightly modified. Observe that while $\sqbinom{0}{0}$ or $\sqbinom{1}{0}$ are excluded, the remaining vector with sum $<2$, namely $\sqbinom{0}{1}$, is associated with the expression $f_{y}$, which in $y^{(n)}$ itself appears in the denominator. The expressions from Propositions \ref{formnoden} and \ref{genrec} represent certain partitions of the vector $\sqbinom{n}{n-2}$, and we have seen in the Introduction that such partitions can be described using multiplicities. We therefore make the following definition for the set arising from Propositions \ref{formnoden} and \ref{genrec} and for another set that will be used below.
\begin{defn}
Denote by $\tilde{A}_{n}$ the set of partitions of $\sqbinom{n}{n-2}$ into $n-1$ (unordered) non-negative integral vectors which cannot be $\sqbinom{1}{0}$ or $\sqbinom{0}{0}$. We consider elements $\tilde{\alpha}\in\tilde{A}_{n}$ as multiplicities $m_{l,r}\geq0$ for non-negative integral $l$ and $r$, for which we have $m_{0,0}=m_{1,0}=0$ and the equalities $\sum_{l,r}m_{l,r}=n-1$, $\sum_{l,r}lm_{l,r}=n$ and $\sum_{l,r}rm_{l,r}=n-2$. In addition, let $A_{n}$ be the set of multiplicities $\{m_{l,r}\}_{\{l+r\geq2\}}$ for which the equalities $\sum_{l,r}lm_{l,r}=n$ and $\sum_{l,r}(r-1)m_{l,r}=-1$ hold. \label{Andef}
\end{defn}

\begin{lem}
Given $n\geq2$, in both the sets $\tilde{A}_{n}$ and $A_{n}$ from Definition \ref{Andef} only finitely many $m_{l,r}$'s can be non-zero. These sets are canonically isomorphic. \label{AntildeAn}
\end{lem}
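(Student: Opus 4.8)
The plan is to handle the two assertions separately: finiteness follows directly from the defining linear relations, and the isomorphism will be realized by the obvious map that forgets (resp.\ reconstructs) the multiplicity of the vector $\binom{0}{1}$.

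For finiteness in $\tilde{A}_{n}$ there is nothing to do beyond observing that the support of $\{m_{l,r}\}$ is finite because $\sum_{l,r}m_{l,r}=n-1$ with all $m_{l,r}\geq0$ (and the relations $\sum_{l,r}lm_{l,r}=n$, $\sum_{l,r}rm_{l,r}=n-2$ even force $l\leq n$ and $r\leq n-2$ whenever $m_{l,r}\neq0$). For $A_{n}$ the point is that every index occurring satisfies $l+r\geq2$, so $\sum_{l,r}(l+r)m_{l,r}\geq2\sum_{l,r}m_{l,r}$; rewriting the left side via $\sum_{l,r}lm_{l,r}=n$ and $\sum_{l,r}(r-1)m_{l,r}=-1$ (the latter giving $\sum_{l,r}rm_{l,r}=\sum_{l,r}m_{l,r}-1$) turns this into $n+\sum_{l,r}m_{l,r}-1\geq2\sum_{l,r}m_{l,r}$, i.e.\ $\sum_{l,r}m_{l,r}\leq n-1$, so again the support is finite.

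For the canonical isomorphism, I would first note that the only vector with $l+r<2$ which is \emph{not} excluded in $\tilde{A}_{n}$ is $\binom{0}{1}$; hence an element $\tilde{\alpha}\in\tilde{A}_{n}$ is the same data as a non-negative integer $m_{0,1}$ together with a family $\{m_{l,r}\}_{l+r\geq2}$ of non-negative integers. I define $\Phi\colon\tilde{A}_{n}\to A_{n}$ by discarding $m_{0,1}$, i.e.\ $\tilde{\alpha}\mapsto\{m_{l,r}\}_{l+r\geq2}$: since $\binom{0}{1}$ contributes $0$ to $\sum lm_{l,r}$ the equality $\sum_{l+r\geq2}lm_{l,r}=n$ persists, and subtracting the $\binom{0}{1}$-terms from $\sum_{l,r}rm_{l,r}=n-2$ and from $\sum_{l,r}m_{l,r}=n-1$ and taking the difference yields $\sum_{l+r\geq2}(r-1)m_{l,r}=(n-2-m_{0,1})-(n-1-m_{0,1})=-1$, so $\Phi(\tilde{\alpha})\in A_{n}$. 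In the other direction, given $\alpha=\{m_{l,r}\}_{l+r\geq2}\in A_{n}$ I set $m_{0,0}=m_{1,0}=0$ and $m_{0,1}:=n-1-\sum_{l+r\geq2}m_{l,r}$; the finiteness bound above gives $m_{0,1}\geq0$, the relation $\sum_{l+r\geq2}(r-1)m_{l,r}=-1$ shows that this same value equals $n-2-\sum_{l+r\geq2}rm_{l,r}$, and then the three equalities defining $\tilde{A}_{n}$ are immediate. The two constructions are manifestly inverse to one another, which gives the claimed bijection.

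The only step needing genuine care — the closest thing to an obstacle — is the non-negativity of the reconstructed multiplicity $m_{0,1}$: this is precisely where the constraint $l+r\geq2$ in the definition of $A_{n}$ is essential (it would fail if one only imposed $l+r\geq1$). Everything else is bookkeeping with the three linear relations.
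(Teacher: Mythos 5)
Your proposal is correct and follows essentially the same route as the paper: the finiteness bound $\sum_{l+r\geq2}m_{l,r}\leq n-1$ obtained from the inequality $l+r-1\geq1$ on the support, the map that forgets $m_{0,1}$, and the reconstruction $m_{0,1}=n-1-\sum_{l+r\geq2}m_{l,r}$ whose non-negativity is exactly the finiteness bound. The bookkeeping with the three linear relations matches the paper's verification step for step.
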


\begin{proof}
Given an element of $A_{n}$, we have $\sum_{l+r\geq2}m_{l,r}\leq\sum_{l+r\geq2}(l+r-1)m_{l,r}$ (since in each summand we have $l+r-1\geq1$ and $m_{l,r}\geq0$), and the latter sum equals $n-1$ by Definition \ref{Andef}. As the corresponding sum equals $n-1$ for elements of $\tilde{A}_{n}$ as well, the finiteness of the non-zero $m_{l,r}$'s is clear for both sets. Consider now the map sending $\tilde{\alpha}=\{m_{l,r}\}_{l,r}\in\tilde{A}_{n}$ to $\alpha=\{m_{l,r}\}_{\{l+r\geq2\}}$. Since in $\tilde{\alpha}$ we have $m_{0,0}=m_{1,0}=0$, it amounts to omitting the multiplicity $m_{0,1}$. It is clear from Definition \ref{Andef} that elements of $\tilde{A}_{n}$ also satisfy the equality $\sum_{l,r}(r-1)m_{l,r}=-1$. As for $m_{0,1}$ both $l$ and $r-1$ vanish, omitting this multiplicity affects neither $\sum_{l,r}lm_{l,r}$ nor $\sum_{l,r}(r-1)m_{l,r}$, implying that our element $\alpha$ indeed lies in $A_{n}$. In addition, the value of $m_{0,1}$ is determined by the others via the equality $n-1=\sum_{l,r}m_{l,r}=\sum_{l+r\geq2}m_{l,r}+m_{0,1}$, so that that the image of an element $\tilde{\alpha}$ in $A_{n}$ determines $m_{0,1}$ and our map is also injective. Moreover, the inequality established in the beginning of the proof implies that this determined value of $m_{0,1}$ is non-negative for every $\alpha \in A_{n}$. Therefore our canonical map $\tilde{\alpha}\mapsto\alpha$ is a bijection, which proves the lemma.
\end{proof}

We therefore obtain the following description of $y^{(n)}$.
\begin{cor}
For every $\alpha \in A_{n}$ there is a coefficient $c_{\alpha}$ such that $y^{(n)}$ can be written as $\sum_{\alpha=\{m_{l,r}\}_{\{l+r\geq2\}} \in A_{n}}\Big[c_{\alpha}\prod_{l+r\geq2}(\Delta_{l}f_{y^{r}})^{m_{l,r}}\Big/f_{y}^{n+\sum_{l+r\geq2}m_{l,r}}\Big]$. \label{formwden}
\end{cor}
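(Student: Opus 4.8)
The plan is to obtain Corollary~\ref{formwden} essentially as a repackaging of Proposition~\ref{formnoden} together with the canonical isomorphism $\tilde{A}_{n}\cong A_{n}$ of Lemma~\ref{AntildeAn}. First I would observe that the inductive argument proving Proposition~\ref{formnoden} in fact delivers more than the bare statement: it exhibits $f_{y}^{2n-1}y^{(n)}$ as a linear combination, with (integer) numerical coefficients, of the products $\prod_{i=1}^{n-1}\Delta_{l_{i}}f_{y^{r_{i}}}$ running over the unordered partitions described there. Collecting equal factors in each such product, a partition is simply a system of multiplicities $\{m_{l,r}\}$, i.e.\ an element $\tilde{\alpha}\in\tilde{A}_{n}$, and the product becomes $\prod_{l,r}(\Delta_{l}f_{y^{r}})^{m_{l,r}}$. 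Since two products differing only in the order of their factors are literally the same expression, the coefficient depends only on the multiset $\tilde{\alpha}$, so there are constants $\tilde{c}_{\tilde{\alpha}}$ with $f_{y}^{2n-1}y^{(n)}=\sum_{\tilde{\alpha}\in\tilde{A}_{n}}\tilde{c}_{\tilde{\alpha}}\prod_{l,r}(\Delta_{l}f_{y^{r}})^{m_{l,r}}$.

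Next I would peel off the factors coming from the vector $\binom{0}{1}$. Since Definition~\ref{Deltalg} with $l=0$ gives $\Delta_{0}g=g$, the $(0,1)$-factor equals $(\Delta_{0}f_{y})^{m_{0,1}}=f_{y}^{m_{0,1}}$, while $m_{0,0}=m_{1,0}=0$ forces every remaining factor to satisfy $l+r\geq2$. Dividing the displayed identity by $f_{y}^{2n-1}$ therefore yields
\begin{equation*}
y^{(n)}=\sum_{\tilde{\alpha}\in\tilde{A}_{n}}\tilde{c}_{\tilde{\alpha}}\,\frac{\prod_{l+r\geq2}(\Delta_{l}f_{y^{r}})^{m_{l,r}}}{f_{y}^{\,2n-1-m_{0,1}}}.
\end{equation*}
The defining relations of $\tilde{A}_{n}$ in Definition~\ref{Andef} give $m_{0,1}+\sum_{l+r\geq2}m_{l,r}=\sum_{l,r}m_{l,r}=n-1$, so $2n-1-m_{0,1}=n+\sum_{l+r\geq2}m_{l,r}$, which is exactly the exponent occurring in the Corollary. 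Finally, Lemma~\ref{AntildeAn} identifies $\tilde{A}_{n}$ with $A_{n}$ by discarding $m_{0,1}$; under this bijection each summand above depends only on the image $\alpha=\{m_{l,r}\}_{\{l+r\geq2\}}$, so setting $c_{\alpha}:=\tilde{c}_{\tilde{\alpha}}$ for the unique $\tilde{\alpha}$ mapping to $\alpha$ rewrites the sum as $\sum_{\alpha\in A_{n}}\big[c_{\alpha}\prod_{l+r\geq2}(\Delta_{l}f_{y^{r}})^{m_{l,r}}/f_{y}^{\,n+\sum_{l+r\geq2}m_{l,r}}\big]$, as asserted.

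The computation is routine, and I do not expect a genuine obstacle; the substantive content is already in Proposition~\ref{formnoden} and Lemma~\ref{AntildeAn}. The only points deserving a word of care are making explicit that the inductive proof of Proposition~\ref{formnoden} produces an honest equation rather than merely a catalogue of terms, and checking that the leftover power of $f_{y}$ in the denominator is non-negative. The latter is automatic: $m_{0,1}\geq0$ as a multiplicity of an element of $\tilde{A}_{n}$, whence $2n-1-m_{0,1}=n+\sum_{l+r\geq2}m_{l,r}\geq n$, consistent with the inequality already established in the proof of Lemma~\ref{AntildeAn}.
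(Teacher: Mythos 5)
Your proposal is correct and follows essentially the same route as the paper: write $f_{y}^{2n-1}y^{(n)}$ as a linear combination over $\tilde{A}_{n}$ via Proposition \ref{formnoden}, absorb the $(\Delta_{0}f_{y})^{m_{0,1}}=f_{y}^{m_{0,1}}$ factors into the denominator, and use the relation $m_{0,1}=n-1-\sum_{l+r\geq2}m_{l,r}$ together with the bijection of Lemma \ref{AntildeAn} to reindex by $A_{n}$. The only cosmetic difference is that the paper also cites Proposition \ref{genrec} at this point, but that is not needed for the mere existence of the coefficients $c_{\alpha}$.
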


\begin{proof}
Propositions \ref{formnoden} and \ref{genrec} and Definition \ref{Andef} allow us to present $y^{(n)}$ as the sum $\sum_{\tilde{\alpha}=\{m_{l,r}\}_{l,r}\in\tilde{A}_{n}}c_{\tilde{\alpha}}\prod_{l,r}(\Delta_{l}f_{y^{r}})^{m_{l,r}}\big/f_{y}^{2n-1}$. We now identify each element $\tilde{\alpha}\in\tilde{A}_{n}$ with its image $\alpha \in A_{n}$ via Lemma \ref{AntildeAn} and write $(\Delta_{0}f_{y^{1}})^{m_{0,1}}/f_{y}^{2n-1}$ as $f_{y}^{m_{0,1}-2n+1}$ (as well as $c_{\alpha}=c_{\tilde{\alpha}}$), so that substituting the value of $m_{0,1}$ from the proof of that lemma gives the desired result. This proves the corollary.
\end{proof}

\begin{rmk}
Observe that the sum $h=\sum_{l+r\geq2}m_{l,r}$ is just the number of vectors appearing in $\alpha \in A_{n}$, which is now a partition of $\sqbinom{n}{h-1}$ into $h$ vectors. The proof of Lemma \ref{AntildeAn} shows that
this number $h$ satisfies $1 \leq h \leq n-1$: Indeed, the fact that $h\geq1$ is immediate (since none of the equalities from Definition \ref{Andef} can be satisfied when $m_{l,r}=0$ for every $l+r\geq2$), and as the value $n+h-1$ of $\sum_{l+r\geq2}(l+r)m_{l,r}$ must be at least $\sum_{l+r\geq2}2m_{l,r}=2h$, one also deduces the other inequality. It follows that we can present $A_{n}$ as the disjoint union $\bigcup_{h=1}^{n-1}A_{n,h}$, where $A_{n,h}$ consists of those elements of $A_{n}$ for which $\sum_{l+r\geq2}m_{l,r}=h$ (or equivalently $\sum_{l+r\geq2}rm_{l,r}=h-1$), and the denominator under the terms arising from $\alpha \in A_{n,h}$ in Corollary \ref{formwden} is $f_{y}^{n+h}$. On the other hand, if $A$ is the set of elements $\alpha=\{m_{l,r}\}_{\{l+r\geq2\}}$ satisfying only $m_{l,r}\geq0$, the equality $\sum_{l,r}(r-1)m_{l,r}=-1$, and the finiteness condition from Lemma \ref{AntildeAn}, then we claim that $A$ is the disjoint union $\bigcup_{n=2}^{\infty}A_{n}$. Indeed, the index $n$ such that $\alpha \in A_{n}$ is $\sum_{l+r\geq2}lm_{l,r}$, and we need to show that it must be at least 2. But adding $-1=\sum_{l,r}(r-1)m_{l,r}$ to the latter sum gives $n-1=\sum_{l+r\geq2}(l+r-1)m_{l,r}$, which is at least $\sum_{l+r\geq2}m_{l,r}=h\geq1$, so that the union defining $A$ indeed begins with $n=2$. \label{setpart}
\end{rmk}

\section{The Combinatorial Coefficients \label{DetCoeff}}

We now turn to evaluating the coefficients $c_{\alpha}$ from Corollary \ref{formwden}. It is $-1$ for the single element of $A_{2}$ if $n=2$, when $n=3$ the two elements of $A_{3}$ come with the coefficients $-1$ and $+3$, and for $n=4$ they are easily read from Equation \eqref{yder4} (after dividing by $f_{y}^{7}$). Before we give an explicit formula for them, we shall require their behavior under the inductive definition arising from differentiating and Lemma \ref{denfree}. For this we take an element $\tilde{\alpha}$ of the set $\tilde{A}_{n}$ from Corollary \ref{formwden}, with the multiplicities $\{m_{l,r}\}_{l,r}$ as in that corollary, and we introduce the following notation. Given a vector $\sqbinom{\tilde{l}}{\tilde{r}}$ with $\tilde{l}+\tilde{r}\geq2$ and $m_{\tilde{l},\tilde{r}}\geq1$ we define, for every $l$ and $r$, the multiplicity $m_{+,l,r}^{\tilde{l},\tilde{r}}$ to be $m_{l,r}+1$ if $\sqbinom{l}{r}$ is $\sqbinom{0}{1}$ or $\sqbinom{\tilde{l}+1}{\tilde{r}}$, $m_{l,r}-1$ in case $\sqbinom{l}{r}=\sqbinom{\tilde{l}}{\tilde{r}}$, and just $m_{l,r}$ otherwise. We denote $\{m_{+,l,r}^{\tilde{l},\tilde{r}}\}_{l,r}$ by $\tilde{\alpha}_{+}^{\tilde{l},\tilde{r}}$. In case $\tilde{l}\geq1$, $m_{\tilde{l},\tilde{r}}\geq1$, and $\sqbinom{\tilde{l}}{\tilde{r}}\neq\sqbinom{2}{0}$ we also set $m_{t,l,r}^{\tilde{l},\tilde{r}}$ to be $m_{l,r}+1$ in case $\sqbinom{l}{r}$ is either $\sqbinom{2}{0}$ or $\sqbinom{\tilde{l}-1}{\tilde{r}+1}$, $m_{l,r}-1$ when $\sqbinom{l}{r}=\sqbinom{\tilde{l}}{\tilde{r}}$, and $m_{l,r}$ in any other case, and let $\tilde{\alpha}_{t}^{\tilde{l},\tilde{r}}=\{m_{t,l,r}^{\tilde{l},\tilde{r}}\}_{l,r}$. Finally, set $m_{m,l,r}$ to be $m_{l,r}+1$ if $\sqbinom{l}{r}=\sqbinom{1}{1}$ and $m_{l,r}$ when $\sqbinom{l}{r}\neq\sqbinom{1}{1}$, and denote $\{m_{m,l,r}\}_{l,r}$ by $\tilde{\alpha}_{m}$. We can now prove the following result.
\begin{lem}
For $\tilde{\alpha}\in\tilde{A}_{n}$ and appropriate $\tilde{l}$ and $\tilde{r}$ the elements $\tilde{\alpha}_{+}^{\tilde{l},\tilde{r}}$, $\tilde{\alpha}_{t}^{\tilde{l},\tilde{r}}$, and $\tilde{\alpha}_{m}$ all lie in $\tilde{A}_{n+1}$. Moreover, the image of $c_{\tilde{\alpha}}\prod_{l,r}(\Delta_{l}f_{y^{r}})^{m_{l,r}}$ under the operation $f_{y}^{2}\frac{d}{dx}-(2n-1)\Delta_{1}f_{y}$ consists of the following terms: For every $\tilde{l}$ and $\tilde{r}$ with $\tilde{l}+\tilde{r}\geq2$ the product associated with $\tilde{\alpha}_{+}^{\tilde{l},\tilde{r}}$ comes with the coefficient $m_{\tilde{l},\tilde{r}}c_{\tilde{\alpha}}$; If $\tilde{l}\geq1$ and $\sqbinom{\tilde{l}}{\tilde{r}}\neq\sqbinom{2}{0}$ then the product corresponding to $\tilde{\alpha}_{t}^{\tilde{l},\tilde{r}}$ appears with the coefficient $-\tilde{l}m_{\tilde{l},\tilde{r}}c_{\tilde{\alpha}}$; And the only remaining term is $-(n-1-m_{0,1}+2m_{2,0})c_{\tilde{\alpha}}$ times the product arising from $\tilde{\alpha}_{m}$. \label{contcalpha}
\end{lem}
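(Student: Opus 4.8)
The plan is to expand the image of $c_{\tilde\alpha}\prod_{l,r}(\Delta_l f_{y^r})^{m_{l,r}}$ under the operator $\mathcal{D}:=f_y^2\frac{d}{dx}-(2n-1)\Delta_1 f_y$ --- which, by Lemma~\ref{denfree} together with the identity $f_y\cdot\frac{d}{dx}f_y=\Delta_1 f_y$, carries $f_y^{2n-1}y^{(n)}$ to $f_y^{2n+1}y^{(n+1)}$ --- and then to recognise, monomial by monomial, which element of $\tilde A_{n+1}$ indexes each resulting term. Write $P=\prod_{l,r}(\Delta_l f_{y^r})^{m_{l,r}}$; since $m_{0,0}=m_{1,0}=0$ for $\tilde\alpha\in\tilde A_n$, this product ranges over the index $(0,1)$ and over the indices $(l,r)$ with $l+r\geq2$, and $\Delta_0 f_y=f_y$ by Definition~\ref{Deltalg}. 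First I would apply Leibniz' rule to $f_y^2\frac{d}{dx}P$: for each factor $\Delta_{\tilde l}f_{y^{\tilde r}}$ of $P$ this produces the summand $m_{\tilde l,\tilde r}\big(f_y^2\frac{d}{dx}\Delta_{\tilde l}f_{y^{\tilde r}}\big)$ times $P$ with one copy of $\Delta_{\tilde l}f_{y^{\tilde r}}$ removed.

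The core step is to substitute Lemma~\ref{derDelta}, applied with $g=f_{y^{\tilde r}}$ and $l=\tilde l$ (so that $g_y=f_{y^{\tilde r+1}}$), into each of these summands and to match the resulting monomials against the constructions in the statement. For a factor with $\tilde l+\tilde r\geq2$ the three terms of Lemma~\ref{derDelta} yield: from $f_y\Delta_{\tilde l+1}f_{y^{\tilde r}}$, the monomial obtained by removing a $\binom{\tilde l}{\tilde r}$ and inserting a $\binom{0}{1}$ and a $\binom{\tilde l+1}{\tilde r}$, i.e.\ the one indexed by $\tilde\alpha_+^{\tilde l,\tilde r}$, with combinatorial factor $m_{\tilde l,\tilde r}$; from $\tilde l\,\Delta_1 f_y\cdot\Delta_{\tilde l}f_{y^{\tilde r}}$, the monomial obtained by reinstating the removed factor and merely inserting a $\binom{1}{1}$, i.e.\ the one indexed by $\tilde\alpha_m$, with factor $\tilde l m_{\tilde l,\tilde r}$; and from $-\tilde l\,\Delta_2 f\cdot\Delta_{\tilde l-1}f_{y^{\tilde r+1}}$, the monomial obtained by removing a $\binom{\tilde l}{\tilde r}$ and inserting a $\binom{2}{0}$ and a $\binom{\tilde l-1}{\tilde r+1}$, i.e.\ the one indexed by $\tilde\alpha_t^{\tilde l,\tilde r}$, with factor $-\tilde l m_{\tilde l,\tilde r}$. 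The factor indexed by $(0,1)$ must be treated separately: Lemma~\ref{derDelta} with $l=0$ collapses to $f_y^2\frac{d}{dx}f_y=f_y\Delta_1 f_y$, so, after one power of $f_y$ cancels, this summand just inserts a $\binom{1}{1}$ and contributes $m_{0,1}$ to the coefficient of the monomial of $\tilde\alpha_m$.

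The one genuine subtlety --- which I expect to be the main point to get right --- is the degenerate case $\binom{\tilde l}{\tilde r}=\binom{2}{0}$ of the third term above: then $\Delta_{\tilde l-1}f_{y^{\tilde r+1}}=\Delta_1 f_y$ and the inserted $\binom{2}{0}$ cancels the one just removed, so this term produces no $\tilde\alpha_t$-type monomial but instead contributes $-\tilde l m_{\tilde l,\tilde r}=-2m_{2,0}$ to the coefficient of the monomial of $\tilde\alpha_m$ --- which is precisely why the $\tilde\alpha_t^{\tilde l,\tilde r}$ clause excludes $\binom{\tilde l}{\tilde r}=\binom{2}{0}$ and why the $\tilde\alpha_m$-coefficient carries the summand $2m_{2,0}$. (A second, harmless degeneracy is that the factor $-\tilde l m_{\tilde l,\tilde r}$ vanishes for $\tilde l=0$, consistently with nothing being asserted in that case.) Adding to the monomial of $\tilde\alpha_m$ the contribution $-(2n-1)$ coming from the $-(2n-1)\Delta_1 f_y$ part of $\mathcal{D}$, the stated coefficients of $\tilde\alpha_+^{\tilde l,\tilde r}$ and $\tilde\alpha_t^{\tilde l,\tilde r}$ drop out at once, while the coefficient of $\tilde\alpha_m$ becomes $m_{0,1}+\sum_{l+r\geq2}l m_{l,r}-2m_{2,0}-(2n-1)$; using $m_{0,0}=m_{1,0}=0$ and $\sum_{l,r}l m_{l,r}=n$, which force $\sum_{l+r\geq2}l m_{l,r}=n$, this equals $-(n-1-m_{0,1}+2m_{2,0})$, as claimed.

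It remains to check that $\tilde\alpha_+^{\tilde l,\tilde r}$, $\tilde\alpha_t^{\tilde l,\tilde r}$ and $\tilde\alpha_m$ lie in $\tilde A_{n+1}$, which is a routine verification against Definition~\ref{Andef}: each of the three operations changes the number of vectors by $+1$ and raises each of $\sum_{l,r}l m_{l,r}$ and $\sum_{l,r}r m_{l,r}$ by exactly $1$ (for $\tilde\alpha_+^{\tilde l,\tilde r}$, say, $-\tilde l+0+(\tilde l+1)=1$ and $-\tilde r+1+\tilde r=1$), so the three equalities of Definition~\ref{Andef} at level $n+1$ hold; moreover none of the inserted vectors $\binom{0}{1},\binom{1}{1},\binom{2}{0},\binom{\tilde l+1}{\tilde r},\binom{\tilde l-1}{\tilde r+1}$ can equal $\binom{0}{0}$ or $\binom{1}{0}$ (for the last two this uses $\tilde l+\tilde r\geq2$, respectively $\tilde l\geq1$), and any removed $\binom{\tilde l}{\tilde r}$ occurs with positive multiplicity exactly when its coefficient is non-zero. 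Finally, to justify that the displayed numbers are the actual coefficients and not merely contributions to be merged, one notes that the listed monomials are pairwise distinct: the $\tilde\alpha_+^{\tilde l,\tilde r}$ are the only ones that raise $m_{0,1}$, among the remaining ones the $\tilde\alpha_t^{\tilde l,\tilde r}$ are the only ones that raise $m_{2,0}$, and distinctness within each of these two families follows by tracking the remaining multiplicities. This would complete the proof.
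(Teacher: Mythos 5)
Your proposal is correct and follows essentially the same route as the paper's proof: apply Leibniz' rule, expand each $f_{y}^{2}\frac{d}{dx}\Delta_{\tilde{l}}f_{y^{\tilde{r}}}$ via Lemma \ref{derDelta}, sort the first and third terms into the $\tilde{\alpha}_{+}^{\tilde{l},\tilde{r}}$ and $\tilde{\alpha}_{t}^{\tilde{l},\tilde{r}}$ families, and collect the degenerate cases $\binom{\tilde{l}}{\tilde{r}}=\binom{0}{1}$ and $\binom{\tilde{l}}{\tilde{r}}=\binom{2}{0}$ together with all second terms and the $-(2n-1)\Delta_{1}f_{y}$ contribution into the coefficient of $\tilde{\alpha}_{m}$. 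Your explicit verification that the resulting monomials are pairwise distinct is a welcome detail that the paper leaves implicit.
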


\begin{proof}
We apply Leibniz' rule to $f_{y}^{2}\frac{d}{dx}\big[c_{\tilde{\alpha}}\prod_{l,r}(\Delta_{l}f_{y^{r}})^{m_{l,r}}\big]$, and in the summand associated with $\tilde{l}$ and $\tilde{r}$ (in which we have a coefficient of $m_{\tilde{l},\tilde{r}}$ from the exponent) we expand $f_{y}^{2}\frac{d}{dx}\Delta_{\tilde{l}}f_{y^{\tilde{r}}}$ as in Lemma \ref{derDelta}. When $\tilde{l}+\tilde{r}\geq2$ the first term from that lemma produces the required term associated with $\tilde{\alpha}_{+}^{\tilde{l},\tilde{r}}$, since the multiplicity $m_{0,1}$ of $f_{y}$ increases by 1, and one vector $\sqbinom{\tilde{l}}{\tilde{r}}$ is replaced by $\sqbinom{\tilde{l}+1}{\tilde{r}}$. If $\tilde{l}\geq1$ (which implies $\tilde{l}+\tilde{r}\geq2$ again since $\sqbinom{0}{1}$ is excluded) and $\sqbinom{\tilde{l}}{\tilde{r}}\neq\sqbinom{2}{0}$, then the third term in that lemma gives the desired term corresponding to $\tilde{\alpha}_{t}^{\tilde{l},\tilde{r}}$, since $\Delta_{2}f$ increases $m_{2,0}$ by 1 and one copy of $\sqbinom{\tilde{l}}{\tilde{r}}$ becomes $\sqbinom{\tilde{l}-1}{\tilde{r}+1}$.

The remaining expressions are the second term from Lemma \ref{denfree}, all the second terms from Lemma \ref{derDelta}, the first term from Lemma \ref{derDelta} with $\sqbinom{\tilde{l}}{\tilde{r}}=\sqbinom{0}{1}$, and the third term from that lemma when $\sqbinom{\tilde{l}}{\tilde{r}}=\sqbinom{2}{0}$. It is rather evident that all of these expressions are multiples of the product associated with $\tilde{\alpha}_{m}$, with the respective coefficients $-(2n-1)c_{\tilde{\alpha}}$, $+\sum_{l,r}lm_{l,r}c_{\tilde{\alpha}}$ (which becomes just $+nc_{\tilde{\alpha}}$ by Definition \ref{Andef} since $\tilde{\alpha}\in\tilde{A}_{n}$), $+m_{0,1}c_{\tilde{\alpha}}$, and $-2m_{2,0}c_{\tilde{\alpha}}$, and they sum to the asserted total contribution there. The fact that all the elements in question belong to $\tilde{A}_{n+1}$ either follows form our argument combined with Proposition \ref{formnoden}, or can be easily seen directly. This proves the lemma.
\end{proof}

We shall also need the notation that is dual to the one appearing in Lemma \ref{contcalpha}, and we shall introduce it for an element $\beta=\{\mu_{l,r}\}_{\{l+r\geq2\}}$ of $A_{n+1}$ (and not of $\tilde{A}_{n+1}$). For such an element $\beta$ we define, in case the vector $\sqbinom{\hat{l}}{\hat{r}}$ satisfies $\hat{l}\geq1$, $\hat{l}+\hat{r}\geq3$, and $\mu_{\hat{l},\hat{r}}\geq1$, the multiplicity $\mu_{-,l,r}^{\hat{l},\hat{r}}$ to be $\mu_{l,r}-1$ when $\sqbinom{l}{r}=\sqbinom{\hat{l}}{\hat{r}}$, $\mu_{l,r}+1$ if $\sqbinom{l}{r}=\sqbinom{\hat{l}-1}{\hat{r}}$, and $\mu_{l,r}$ in any other case. Denote the element $\{\mu_{-,l,r}^{\hat{l},\hat{r}}\}_{\{l+r\geq2\}}$ by $\beta_{-}^{\hat{l},\hat{r}}$. Assuming that $\mu_{2,0}\geq1$ and $\sqbinom{\hat{l}}{\hat{r}}$ satisfies $\hat{r}\geq1$, $\sqbinom{\hat{l}}{\hat{r}}\neq\sqbinom{1}{1}$, and $\mu_{\hat{l},\hat{r}}\geq1$ once again, we set $\mu_{b,l,r}^{\hat{l},\hat{r}}$ to be $\mu_{l,r}-1$ if $\sqbinom{l}{r}$ is either $\sqbinom{\hat{l}}{\hat{r}}$ or $\sqbinom{2}{0}$, $\mu_{l,r}+1$ in case $\sqbinom{l}{r}=\sqbinom{\hat{l}+1}{\hat{r}-1}$, and $\mu_{l,r}$ otherwise, and then $\{\mu_{b,l,r}^{\hat{l},\hat{r}}\}_{\{l+r\geq2\}}$ is denoted by $\beta_{b}^{\hat{l},\hat{r}}$. Finally, in case $\mu_{1,1}\geq1$ we let $\mu_{d,l,r}$ be $\mu_{l,r}-1$ in case $\sqbinom{l}{r}=\sqbinom{1}{1}$ and $\mu_{l,r}$ if $\sqbinom{l}{r}\neq\sqbinom{1}{1}$, and set $\beta_{d}=\{\mu_{d,l,r}\}_{\{l+r\geq2\}}$. The duality of the two notions is proved as follows, where putting and omitting the tilde means applying the canonical map from Lemma \ref{AntildeAn}, with the respective index and in the appropriate direction.
\begin{lem}
For $n\geq2$ and $\beta \in A_{n+1}$ all the expressions $\beta_{-}^{\hat{l},\hat{r}}$, $\beta_{b}^{\hat{l},\hat{r}}$, and $\beta_{d}$ lie in $A_{n}$ (when they are defined). Let $\tilde{\alpha}\in\tilde{A}_{n}$ as well as two vectors $\sqbinom{\tilde{l}}{\tilde{r}}$ and $\sqbinom{\hat{l}}{\hat{r}}$ be also given. Assume that $\sqbinom{\hat{l}}{\hat{r}}=\sqbinom{\tilde{l}+1}{\tilde{r}}$ (or equivalently $\sqbinom{\tilde{l}}{\tilde{r}}=\sqbinom{\hat{l}-1}{\hat{r}}$), and then $\tilde{\alpha}_{+}^{\tilde{l},\tilde{r}}$ is defined and equals $\tilde{\beta}$ if and only if $\beta_{-}^{\hat{l},\hat{r}}$ is defined and equals $\alpha$. On the other hand, if $\sqbinom{\hat{l}}{\hat{r}}=\sqbinom{\tilde{l}-1}{\tilde{r}+1}$ (which is equivalent to $\sqbinom{\tilde{l}}{\tilde{r}}=\sqbinom{\hat{l}+1}{\hat{r}-1}$) then $\tilde{\alpha}_{t}^{\tilde{l},\tilde{r}}$ is defined and equals $\tilde{\beta}$ precisely when $\beta_{b}^{\hat{l},\hat{r}}$ is defined and equals $\alpha$. Finally, $\tilde{\beta}$ is of the form $\tilde{\alpha}_{m}$ if and only if $\beta_{d}$ is defined and equals $\alpha$. \label{dualnot}
\end{lem}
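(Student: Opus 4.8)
My plan is to prove the closure statement and the duality statement separately, since both reduce to organized bookkeeping once the right set-up is in place. For closure, observe that each of $\beta\mapsto\beta_{-}^{\hat{l},\hat{r}}$, $\beta\mapsto\beta_{b}^{\hat{l},\hat{r}}$ and $\beta\mapsto\beta_{d}$ modifies only finitely many multiplicities, all attached to vectors of weight $l+r\geq2$, so by the description of $A$ in Remark \ref{setpart} it suffices to follow the two linear functionals $L=\sum_{l,r}lm_{l,r}$ and $M=\sum_{l,r}(r-1)m_{l,r}$ together with non-negativity. Replacing one $\binom{\hat{l}}{\hat{r}}$ by $\binom{\hat{l}-1}{\hat{r}}$ changes $L$ by $-1$ and $M$ by $0$; removing one $\binom{2}{0}$ and replacing $\binom{\hat{l}}{\hat{r}}$ by $\binom{\hat{l}+1}{\hat{r}-1}$ changes $L$ by $-2+1=-1$ and $M$ by $+1-1=0$; and removing one $\binom{1}{1}$ changes $L$ by $-1$ and $M$ by $0$. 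Since $L=n+1$ and $M=-1$ on $A_{n+1}$, in all three cases the image has $L=n$ and $M=-1$, hence lies in $A_{n}$ by Remark \ref{setpart}; the side conditions built into the three definitions --- $\hat{l}\geq1$ with $\hat{l}+\hat{r}\geq3$; $\mu_{2,0}\geq1$, $\hat{r}\geq1$ and $\binom{\hat{l}}{\hat{r}}\neq\binom{1}{1}$; $\mu_{1,1}\geq1$ --- are exactly what keeps every multiplicity non-negative and every vector touched of weight $\geq2$.

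For the duality I would invoke the identification of Lemma \ref{AntildeAn}: an element of $\tilde{A}_{k}$ is determined by its restriction to the weight-$\geq2$ part, since $m_{0,0}=m_{1,0}=0$ there and $m_{0,1}$ is then forced by $\sum m_{l,r}=k-1$. Hence $\tilde{\beta}=\tilde{\alpha}_{+}^{\tilde{l},\tilde{r}}$ holds in $\tilde{A}_{n+1}$ precisely when it holds after discarding the $\binom{0}{1}$-coordinate, and on the weight-$\geq2$ part the operation $\tilde{\alpha}\mapsto\tilde{\alpha}_{+}^{\tilde{l},\tilde{r}}$ (whose output does lie in $\tilde{A}_{n+1}$ by Lemma \ref{contcalpha}) is simply ``lower $m_{\tilde{l},\tilde{r}}$ by $1$, raise $m_{\tilde{l}+1,\tilde{r}}$ by $1$''. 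With $\binom{\hat{l}}{\hat{r}}=\binom{\tilde{l}+1}{\tilde{r}}$ this is the exact inverse of $\beta\mapsto\beta_{-}^{\hat{l},\hat{r}}$, so after the identifications of Lemma \ref{AntildeAn} the two moves become mutually inverse partial bijections, which is precisely the asserted biconditional provided their domains correspond. Matching the domains is routine: the purely combinatorial side conditions translate into one another (for instance $\tilde{l}+\tilde{r}\geq2\Leftrightarrow\hat{l}\geq1$ and $\hat{l}+\hat{r}\geq3$ for the $+$/$-$ pair, and $\binom{\tilde{l}}{\tilde{r}}\neq\binom{2}{0}\Leftrightarrow\binom{\hat{l}}{\hat{r}}\neq\binom{1}{1}$ for the $t$/$b$ pair), while every remaining ``$\geq1$'' condition on a multiplicity is forced by the relevant equality hypothesis, because the move on the opposite side increments exactly that multiplicity. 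The same scheme covers the pair $\tilde{\alpha}_{t}^{\tilde{l},\tilde{r}}$ / $\beta_{b}^{\hat{l},\hat{r}}$ with $\binom{\hat{l}}{\hat{r}}=\binom{\tilde{l}-1}{\tilde{r}+1}$ --- there the $\binom{2}{0}$-coordinate is visible and the moves still invert one another --- and the trivial pair $\tilde{\alpha}_{m}$ / $\beta_{d}$, which touches only the $\binom{1}{1}$-coordinate.

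The step I expect to be the genuine obstacle is the edge-case checking concealed inside the phrases ``the moves invert one another'' and ``the domains correspond'': one must confirm, in every branch, that the two or three vectors whose multiplicities move are pairwise distinct and never coincide with any of the forbidden vectors $\binom{0}{0}$, $\binom{1}{0}$, $\binom{0}{1}$, $\binom{2}{0}$, $\binom{1}{1}$ --- for example that $\binom{\tilde{l}+1}{\tilde{r}}$ is never $\binom{0}{1}$, that $\binom{\tilde{l}-1}{\tilde{r}+1}$ is never $\binom{2}{0}$, $\binom{1}{0}$ or $\binom{0}{0}$, and that $\binom{\hat{l}+1}{\hat{r}-1}$ equals $\binom{2}{0}$ only in the already-excluded case $\binom{\hat{l}}{\hat{r}}=\binom{1}{1}$. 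Every such verification reduces to the inequalities already imposed --- the requirement $\tilde{l}+\tilde{r}\geq2$ or $\tilde{l}\geq1$, together with the fact that $\tilde{\alpha}\in\tilde{A}_{n}$ has $m_{0,0}=m_{1,0}=0$, which forces any vector $\binom{\tilde{l}}{0}$ in its support to have $\tilde{l}\geq2$ --- so once these are in hand the remaining work is substitution into Definition \ref{Andef} and Lemma \ref{AntildeAn}.
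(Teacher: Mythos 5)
Your proposal is correct and follows essentially the same route as the paper: closure is checked against the two defining linear conditions of $A_n$, and each duality is established by matching the side conditions on $\binom{\tilde{l}}{\tilde{r}}$ and $\binom{\hat{l}}{\hat{r}}$ and observing that the two moves are mutually inverse on the affected multiplicities, with the positivity requirements on each side forced by the increments on the other. The only cosmetic difference is that you delegate the $\binom{0}{1}$-coordinate to the bijection of Lemma \ref{AntildeAn}, whereas the paper tracks $\mu_{0,1}$ versus $m_{0,1}$ explicitly (citing the proof of Proposition \ref{genrec} for its positivity); both are fine.
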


\begin{proof}
The two conditions on $\beta_{-}^{\hat{l},\hat{r}}$, $\beta_{b}^{\hat{l},\hat{r}}$, and $\beta_{d}$ from Definition \ref{Andef} are easily verified using the fact that $\beta \in A_{n+1}$, proving the first assertion. For the second one the condition $\tilde{l}+\tilde{r}\geq2$ implies $\hat{l}\geq1$ and $\hat{l}+\hat{r}\geq3$ for the value of $\sqbinom{\hat{l}}{\hat{r}}$, and on the other hand from $\hat{l}\geq1$ and $\hat{l}+\hat{r}\geq3$ we deduce $\tilde{l}+\tilde{r}\geq2$ with our $\sqbinom{\tilde{l}}{\tilde{r}}$. In addition, with this relation between $\sqbinom{\hat{l}}{\hat{r}}$ and $\sqbinom{\tilde{l}}{\tilde{r}}$ the two conditions $\tilde{\beta}=\tilde{\alpha}_{+}^{\tilde{l},\tilde{r}}$ (namely $\mu_{l,r}=m_{+,l,r}^{\tilde{l},\tilde{r}}$ for every $l$ and $r$) and $\alpha=\beta_{-}^{\hat{l},\hat{r}}$ (i.e., $m_{l,r}=\mu_{-,l,r}^{\hat{l},\hat{r}}$ wherever $l+r\geq2$) both mean that for $l+r\geq2$ one has $\mu_{l,r}=m_{l,r}$ when $\sqbinom{l}{r}$ equals neither $\sqbinom{\hat{l}}{\hat{r}}$ nor $\sqbinom{\tilde{l}}{\tilde{r}}$, while when $\sqbinom{l}{r}=\sqbinom{\hat{l}}{\hat{r}}$ we have the equivalent equalities $\mu_{\hat{l},\hat{r}}=m_{\hat{l},\hat{r}}+1\geq1$ and $m_{\hat{l},\hat{r}}=\mu_{\hat{l},\hat{r}}-1$ and with $\sqbinom{l}{r}=\sqbinom{\tilde{l}}{\tilde{r}}$ we get $\mu_{\tilde{l},\tilde{r}}=m_{\tilde{l},\tilde{r}}-1$ and $m_{\tilde{l},\tilde{r}}=\mu_{\tilde{l},\tilde{r}}+1\geq1$ (the inequalities covering more admissibility conditions). The remaining equality $\mu_{0,1}=m_{0,1}+1$ is now a consequence is the fact that $\tilde{\beta}\in\tilde{A}_{n+1}$ and $\tilde{\alpha}_{+}^{\tilde{l},\tilde{r}}\in\tilde{A}_{n}$, and the proof of Proposition \ref{genrec} shows that if $\mu_{\hat{l},\hat{r}}\geq1$ for some $\sqbinom{\hat{l}}{\hat{r}}$ with $\hat{l}+\hat{r}\geq3$ then $\mu_{0,1}\geq1$. The second assertion is thus established.

As for the third one, we note that with the current relation between $\sqbinom{\hat{l}}{\hat{r}}$ and $\sqbinom{\tilde{l}}{\tilde{r}}$ we have $\hat{r}\geq1$ and $\tilde{l}\geq1$, the common sum $\tilde{l}+\tilde{r}=\hat{l}+\hat{r}$ is at least 2, and $\sqbinom{\tilde{l}}{\tilde{r}}\neq\sqbinom{2}{0}$ precisely when $\sqbinom{\hat{l}}{\hat{r}}\neq\sqbinom{1}{1}$. Then the condition $\tilde{\beta}=\tilde{\alpha}_{t}^{\tilde{l},\tilde{r}}$ (which means $\mu_{l,r}=m_{t,l,r}^{\tilde{l},\tilde{r}}$ for every $l$ and $r$) and the condition $\alpha=\beta_{b}^{\hat{l},\hat{r}}$ (namely $m_{l,r}=\mu_{b,l,r}^{\hat{l},\hat{r}}$ wherever $l+r\geq2$) both amount, for any vector $\sqbinom{l}{r}$ with $l+r\geq2$, to the following equalities: $\mu_{l,r}=m_{l,r}$ if $\sqbinom{l}{r}$ is not any of the vectors $\sqbinom{\hat{l}}{\hat{r}}$, $\sqbinom{\tilde{l}+1}{\tilde{r}}$, or $\sqbinom{2}{0}$; $\mu_{\hat{l},\hat{r}}=m_{\hat{l},\hat{r}}+1\geq1$ and $m_{\hat{l},\hat{r}}=\mu_{\hat{l},\hat{r}}-1$ for $\sqbinom{l}{r}=\sqbinom{\hat{l}}{\hat{r}}$; $\mu_{\tilde{l},\tilde{r}}=m_{\tilde{l},\tilde{r}}-1$ and $m_{\tilde{l},\tilde{r}}=\mu_{\tilde{l},\tilde{r}}+1\geq1$ when $\sqbinom{l}{r}=\sqbinom{\tilde{l}}{\tilde{r}}$; and $\mu_{2,0}=m_{2,0}+1\geq1$ and $m_{2,0}=\mu_{2,0}-1$ in case $\sqbinom{l}{r}=\sqbinom{2}{0}$. Since the inequalities provide the remaining admissibility conditions, and the fact that $\tilde{\beta}\in\tilde{A}_{n+1}$ and $\tilde{\alpha}_{t}^{\tilde{l},\tilde{r}}\in\tilde{A}_{n}$ imply the last inequality $\mu_{0,1}=m_{0,1}$, this proves the third assertion.

The fourth assertion is simpler: From both $\tilde{\beta}=\tilde{\alpha}_{m}$ and $\alpha=\beta_{d}$ we get $\mu_{l,r}=m_{l,r}$ wherever $l+r\geq2$ and $\sqbinom{l}{r}\neq\sqbinom{1}{1}$ as well as $\mu_{1,1}=m_{1,1}+1\geq1$ (including the admissibility condition) and $m_{1,1}=\mu_{1,1}-1$. Then from $\tilde{\beta}\in\tilde{A}_{n+1}$ and $\tilde{\alpha}_{m}\in\tilde{A}_{n}$ we also obtain $\mu_{0,1}=m_{0,1}$. This proves the lemma.
\end{proof}

For writing the recursive formula for the coefficients $c_{\alpha}$ from Corollary \ref{formwden} we shall need the Kronecker $\delta$-symbol $\delta_{i,j}$, which equals 1 in case $i=j$ and 0 otherwise. More precisely, we shall use its complement $\overline{\delta}_{i,j}=1-\delta_{i,j}$. Writing $y^{(n)}$ and $y^{(n+1)}$ as in Corollary \ref{formwden}, we can express $c_{\beta}$ as follows.
\begin{cor}
Given $n\geq2$ and $\beta \in A_{n+1}$, the coefficient $c_{\beta}$ equals \[\sum_{\hat{l}+\hat{r}\geq2}\!\overline{\delta}_{\mu_{\hat{l},\hat{r}},0}\!\Big[\overline{\delta}_{\hat{l},0}\overline{\delta}_{\hat{l}+\hat{r},2}(\mu_{\hat{l}-1,\hat{r}}+1)c_{\beta_{-}^{\hat{l},\hat{r}}}-
\overline{\delta}_{\mu_{2,0},0}\overline{\delta}_{\hat{r},0}\overline{\delta}_{\sqbinom{\hat{l}}{\hat{r}},\sqbinom{1}{1}}(\hat{l}+1)(\mu_{\hat{l}+1,\hat{r}-1}+1)c_{\beta_{b}^{\hat{l},\hat{r}}}\Big]\] minus the expression $\overline{\delta}_{\mu_{1,1},0}\big(\sum_{l+r\geq2}r\mu_{l,r}+2\mu_{2,0}\big)c_{\beta_{d}}$. \label{contcbeta}
\end{cor}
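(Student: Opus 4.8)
The plan is to read off $c_{\beta}$ by applying Lemma \ref{denfree} with the given $n$, expanding the result via Lemma \ref{contcalpha}, and then comparing coefficients of monomials. First I would clear denominators: by Corollary \ref{formwden} and Lemma \ref{AntildeAn} write $f_{y}^{2n-1}y^{(n)}=\sum_{\tilde{\alpha}\in\tilde{A}_{n}}c_{\tilde{\alpha}}\prod_{l,r}(\Delta_{l}f_{y^{r}})^{m_{l,r}}$ and, with $n+1$ in place of $n$, $f_{y}^{2n+1}y^{(n+1)}=\sum_{\tilde{\beta}\in\tilde{A}_{n+1}}c_{\tilde{\beta}}\prod_{l,r}(\Delta_{l}f_{y^{r}})^{\mu_{l,r}}$, where $c_{\tilde{\alpha}}=c_{\alpha}$ and $c_{\tilde{\beta}}=c_{\beta}$ under the identification of Lemma \ref{AntildeAn}. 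Lemma \ref{denfree} gives $f_{y}^{2n+1}y^{(n+1)}=\big(f_{y}^{2}\frac{d}{dx}-(2n-1)\Delta_{1}f_{y}\big)\big(f_{y}^{2n-1}y^{(n)}\big)$, and Lemma \ref{contcalpha} evaluates the right hand side term by term: applied to $c_{\tilde{\alpha}}\prod_{l,r}(\Delta_{l}f_{y^{r}})^{m_{l,r}}$ it produces the monomial of $\tilde{\alpha}_{+}^{\tilde{l},\tilde{r}}$ with coefficient $m_{\tilde{l},\tilde{r}}c_{\tilde{\alpha}}$ for each $\binom{\tilde{l}}{\tilde{r}}$ with $\tilde{l}+\tilde{r}\geq2$, the monomial of $\tilde{\alpha}_{t}^{\tilde{l},\tilde{r}}$ with coefficient $-\tilde{l}m_{\tilde{l},\tilde{r}}c_{\tilde{\alpha}}$ when $\tilde{l}\geq1$ and $\binom{\tilde{l}}{\tilde{r}}\neq\binom{2}{0}$, and the monomial of $\tilde{\alpha}_{m}$ with coefficient $-(n-1-m_{0,1}+2m_{2,0})c_{\tilde{\alpha}}$. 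Since distinct elements of $\tilde{A}_{n+1}$ give distinct monomials in the symbols $\Delta_{l}f_{y^{r}}$ (these being the expressions one reaches by iterating Lemmas \ref{denfree} and \ref{derDelta}), comparing the coefficient of $\prod_{l,r}(\Delta_{l}f_{y^{r}})^{\mu_{l,r}}$ on the two sides yields $c_{\beta}$ as a sum over the $\tilde{\alpha}$'s that feed into $\tilde{\beta}$.

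The core of the argument is to identify these $\tilde{\alpha}$'s and reindex, which is exactly what Lemma \ref{dualnot} supplies: $\tilde{\alpha}_{+}^{\tilde{l},\tilde{r}}=\tilde{\beta}$ occurs precisely for $\tilde{\alpha}$ with $\alpha=\beta_{-}^{\hat{l},\hat{r}}$ where $\binom{\hat{l}}{\hat{r}}=\binom{\tilde{l}+1}{\tilde{r}}$; $\tilde{\alpha}_{t}^{\tilde{l},\tilde{r}}=\tilde{\beta}$ precisely for $\alpha=\beta_{b}^{\hat{l},\hat{r}}$ where $\binom{\hat{l}}{\hat{r}}=\binom{\tilde{l}-1}{\tilde{r}+1}$; and $\tilde{\beta}=\tilde{\alpha}_{m}$ precisely for $\alpha=\beta_{d}$. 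Reindexing the three families by $\binom{\hat{l}}{\hat{r}}$ and substituting $c_{\tilde{\alpha}}=c_{\beta_{-}^{\hat{l},\hat{r}}}$, $c_{\beta_{b}^{\hat{l},\hat{r}}}$, or $c_{\beta_{d}}$, I would use the definitions of $\beta_{-}^{\hat{l},\hat{r}}$ and $\beta_{b}^{\hat{l},\hat{r}}$ to rewrite $m_{\tilde{l},\tilde{r}}$ as $\mu_{\hat{l}-1,\hat{r}}+1$ in the first family and as $\mu_{\hat{l}+1,\hat{r}-1}+1$ in the second (with $\tilde{l}$ becoming $\hat{l}+1$ in the coefficient $-\tilde{l}m_{\tilde{l},\tilde{r}}$), and translate the admissibility constraints on $(\tilde{\alpha},\tilde{l},\tilde{r})$ together with the definedness conditions for $\beta_{-}^{\hat{l},\hat{r}}$, $\beta_{b}^{\hat{l},\hat{r}}$, $\beta_{d}$ into the complementary Kronecker factors $\overline{\delta}_{\mu_{\hat{l},\hat{r}},0}$, $\overline{\delta}_{\hat{l},0}$, $\overline{\delta}_{\hat{l}+\hat{r},2}$, $\overline{\delta}_{\hat{r},0}$, $\overline{\delta}_{\binom{\hat{l}}{\hat{r}},\binom{1}{1}}$, $\overline{\delta}_{\mu_{2,0},0}$, $\overline{\delta}_{\mu_{1,1},0}$ that appear in the statement. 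This reproduces the bracketed sum over $\binom{\hat{l}}{\hat{r}}$ together with the isolated $\beta_{d}$-term, up to rewriting the coefficient $-(n-1-m_{0,1}+2m_{2,0})$.

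That last rewriting is the only step that is not pure bookkeeping. Here $\tilde{\alpha}=\beta_{d}$ lies in $\tilde{A}_{n}$, so $\sum_{l,r}m_{l,r}=n-1$ and $m_{0,0}=m_{1,0}=0$, whence $n-1-m_{0,1}=\sum_{l+r\geq2}m_{l,r}$; passing from $\beta_{d}$ back to $\beta$ restores one copy of $\binom{1}{1}$, so $\sum_{l+r\geq2}m_{l,r}=\big(\sum_{l+r\geq2}\mu_{l,r}\big)-1$, and the defining identity $\sum_{l+r\geq2}(r-1)\mu_{l,r}=-1$ for $\beta\in A_{n+1}$ (Definition \ref{Andef}) turns this into $\sum_{l+r\geq2}r\mu_{l,r}$. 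Since $\binom{2}{0}\neq\binom{1}{1}$ one also has $m_{2,0}=\mu_{2,0}$, so the coefficient of the $\beta_{d}$-monomial becomes $-\big(\sum_{l+r\geq2}r\mu_{l,r}+2\mu_{2,0}\big)c_{\beta_{d}}$, exactly the subtracted term.

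The \textbf{main obstacle} is the careful two-way translation between the ``forward'' description of Lemma \ref{contcalpha} (what the monomial attached to $\tilde{\alpha}$ generates) and the ``backward'' description needed for $c_{\beta}$ (what generates the monomial attached to $\tilde{\beta}$): one must check that Lemma \ref{dualnot} matches these bijectively, that each admissibility inequality and each range for $\binom{\hat{l}}{\hat{r}}$ corresponds to precisely one of the complementary Kronecker factors written in the statement so that no contribution is lost or counted twice, and that the coefficient identities $m_{\tilde{l},\tilde{r}}=\mu_{\hat{l}-1,\hat{r}}+1$ and $m_{\tilde{l},\tilde{r}}=\mu_{\hat{l}+1,\hat{r}-1}+1$ hold on the nose after the substitution $\alpha=\beta_{-}^{\hat{l},\hat{r}}$, $\alpha=\beta_{b}^{\hat{l},\hat{r}}$. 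Once these matchings are in place, the corollary follows by direct substitution and the elementary count of the previous paragraph.
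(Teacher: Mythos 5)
Your proposal is correct and follows essentially the same route as the paper: express both sides via Lemma \ref{denfree}, gather the contributions to the monomial of $\tilde{\beta}$ using Lemma \ref{contcalpha}, invert the correspondence via Lemma \ref{dualnot} to reindex by $\binom{\hat{l}}{\hat{r}}$ with the admissibility conditions encoded in the $\overline{\delta}$-factors, and rewrite $-(n-1-m_{0,1}+2m_{2,0})$ as $-\big(\sum_{l+r\geq2}r\mu_{l,r}+2\mu_{2,0}\big)$ using Definition \ref{Andef} and $m_{1,1}=\mu_{1,1}-1$. The coefficient substitutions $m_{\hat{l}-1,\hat{r}}=\mu_{\hat{l}-1,\hat{r}}+1$ and $\tilde{l}m_{\tilde{l},\tilde{r}}=(\hat{l}+1)(\mu_{\hat{l}+1,\hat{r}-1}+1)$ are exactly as in the paper.
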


\begin{proof}
Expressing $f_{y}^{2n-1}y^{(n)}$ and $f_{y}^{2n+1}y^{(n+1)}$ using the multiplicities (as in the proof of Corollary \ref{formwden}), and recalling from Lemma \ref{denfree} that the latter is the image of the former under $f_{y}^{2}\frac{d}{dx}-(2n-1)\Delta_{1}f_{y}$, we need to gather the contributions to the product associated with the element $\tilde{\beta}\in\tilde{A}_{n+1}$ corresponding to $\beta$. By Lemma \ref{contcalpha}, such contributions occur precisely from those $\tilde{\alpha}\in\tilde{A}_{n}$ for which $\tilde{\beta}$ is $\tilde{\alpha}_{+}^{\tilde{l},\tilde{r}}$ or $\tilde{\alpha}_{t}^{\tilde{l},\tilde{r}}$ for some admissible vector $\sqbinom{\tilde{l}}{\tilde{r}}$ or for which $\tilde{\beta}=\tilde{\alpha}_{m}$, and we denote, for every element $\tilde{\alpha}\in\tilde{A}_{n}$, the corresponding element of $A_{n}$ by $\alpha$ as before. Lemma \ref{dualnot} shows that $\tilde{\beta}$ is $\tilde{\alpha}_{+}^{\hat{l}-1,\hat{r}}$ for $\alpha=\beta_{-}^{\hat{l},\hat{r}}$ wherever $\hat{l}\geq1$, $\hat{l}+\hat{r}\geq3$, and $\mu_{\hat{l},\hat{r}}\geq1$, it is $\tilde{\alpha}_{t}^{\hat{l}+1,\hat{r}-1}$ with $\alpha=\beta_{b}^{\hat{l},\hat{r}}$ when $\hat{r}\geq1$, $\sqbinom{\hat{l}}{\hat{r}}\neq\sqbinom{1}{1}$, and $\mu_{\hat{l},\hat{r}}\geq1$ in case $\mu_{2,0}\geq1$, and if $\mu_{1,1}\geq1$ then it is also $\tilde{\alpha}_{m}$ where $\alpha$ is $\beta_{d}$. It follows that $c_{\beta}$ is the sum of the resulting contributions, and substituting the values of the parameters $\tilde{l}$ and $m_{\tilde{l},\tilde{r}}$ associated with each vector $\sqbinom{\hat{l}}{\hat{r}}$ inside the relevant contributions from Lemma \ref{contcalpha} immediately gives the asserted sum over $\hat{l}+\hat{r}\geq2$ (the $\overline{\delta}$ expressions are there to enforce the admissibility conditions). As for the multiplier of $c_{\beta_{d}}$ in case $\mu_{1,1}\geq1$, recall that the value of $m_{0,1}$ was seen to be $n-1-\sum_{l+r\geq2}m_{l,r}$ in the proof of Lemma \ref{AntildeAn}, and the sum here also equals $\sum_{l+r\geq2}rm_{l,r}+1$ by Definition \ref{Andef}. By observing that $m_{l,r}$ is $\mu_{l,r}-1$ when $l=r=1$ and $\mu_{l,r}$ otherwise (when $\alpha=\beta_{d}$), we indeed obtain the asserted value. This completes the proof of the corollary.
\end{proof}

We can now define the combinatorial numbers that we shall soon need.
\begin{defn}
For $\alpha=\{m_{l,r}\}_{\{l+r\geq2\}}$ in the set $A$ from Remark \ref{setpart} we set \[C_{\alpha}=\Bigg(\sum_{l+r\geq2}lm_{l,r}\Bigg)!\Bigg(\sum_{l+r\geq2}rm_{l,r}\Bigg)!\Bigg/\prod_{l+r\geq2}l!^{m_{l,r}}r!^{m_{l,r}}m_{l,r}!.\] When $\alpha$ is in $A_{n,h}$, this is the number of possibilities to put $n$ marked blue balls and $h-1$ marked red balls into $h$ identical boxes such that for every $l$ and $r$ (with $l+r\geq2$) there are $m_{l,r}$ boxes containing $l$ blue balls and $r$ red balls. \label{combcoeff}
\end{defn}
Note that the expressions $m_{l,r}!$ appear in the denominator of $C_{\alpha}$ in Definition \ref{combcoeff} since the boxes in the combinatorial description there are identical (this is also the case with the coefficients in Fa\`{a} di Bruno's formula---see \cite{[J1]}). The technical property of the coefficients from Definition \ref{combcoeff} that we shall require is the following one.
\begin{prop}
For $n\geq2$ and $\beta=\{\mu_{l,r}\}_{\{l+r\geq2\}} \in A_{n+1}$ the number $C_{\beta}$ is \[\sum_{\hat{l}+\hat{r}\geq2}\!\!\overline{\delta}_{\mu_{\hat{l},\hat{r}},0}\!\Big[\overline{\delta}_{\hat{l},0}\overline{\delta}_{\hat{l}+\hat{r},2}(\mu_{\hat{l}-1,\hat{r}}+1)C_{\beta_{-}^{\hat{l},\hat{r}}}+
\overline{\delta}_{\mu_{2,0},0}\overline{\delta}_{\hat{r},0}\overline{\delta}_{\sqbinom{\hat{l}}{\hat{r}}\!,\sqbinom{1}{1}}\!(\hat{l}+1)(\mu_{\hat{l}+1,\hat{r}-1}+1)C_{\beta_{b}^{\hat{l},\hat{r}}}\!\Big]\] plus the two terms $2\overline{\delta}_{\mu_{1,1},0}\mu_{2,0}C_{\beta_{d}}$ and $\overline{\delta}_{\mu_{1,1},0}\sum_{l+r\geq2}r\mu_{l,r} \cdot C_{\beta_{d}}$.
\label{indcomb}
\end{prop}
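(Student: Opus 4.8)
### Proof Strategy for Proposition \ref{indcomb}

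The plan is to verify the asserted identity for the combinatorial numbers $C_\beta$ from Definition \ref{combcoeff} by a direct manipulation of factorials, exploiting the fact that the operations $\beta\mapsto\beta_-^{\hat{l},\hat{r}}$, $\beta\mapsto\beta_b^{\hat{l},\hat{r}}$, and $\beta\mapsto\beta_d$ change the multiplicities in a completely controlled way, so each ratio $C_{\beta_-^{\hat{l},\hat{r}}}/C_\beta$ (and similarly for the other two) is an elementary rational expression in the $\mu_{l,r}$'s. Concretely, write $P=\sum_{l+r\geq2}l\mu_{l,r}=n+1$ and $Q=\sum_{l+r\geq2}r\mu_{l,r}=h-1$ for the ``blue'' and ``red'' totals of $\beta\in A_{n+1,h}$, so that $C_\beta=P!\,Q!\big/\prod l!^{\mu_{l,r}}r!^{\mu_{l,r}}\mu_{l,r}!$. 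First I would record, for each of the three neighbours, exactly which factors in numerator and denominator are affected: passing from $\beta$ to $\beta_-^{\hat{l},\hat{r}}$ decreases $P$ by $1$ (leaving $Q$ fixed), removes one box of type $\binom{\hat{l}}{\hat{r}}$ and adds one of type $\binom{\hat{l}-1}{\hat{r}}$; passing to $\beta_b^{\hat{l},\hat{r}}$ keeps $P$ fixed and decreases $Q$ by $1$, removing one box each of types $\binom{\hat{l}}{\hat{r}}$ and $\binom{2}{0}$ and adding one of type $\binom{\hat{l}+1}{\hat{r}-1}$; and passing to $\beta_d$ decreases $Q$ by $1$ and removes one box of type $\binom{1}{1}$.

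The key computation is then the following three ratio formulas, which I would establish by cancelling the $l!$-, $r!$-, and $\mu!$-factors that survive and using the elementary identities $P!/(P-1)!=P$, $\binom{k}{1}=k$, and $(\mu+1)\mu!=(\mu+1)!$. For the first neighbour one gets, writing $\binom{\hat{l}}{\hat{r}}=:\binom{l}{r}$ with $l\geq1$ and $l+r\geq3$,
\[
C_{\beta_-^{l,r}}=\frac{1}{P}\cdot\frac{l!^{\mu_{l,r}}\,\mu_{l,r}!}{(l-1)!^{\mu_{l,r}-1}(\mu_{l,r}-1)!}\cdot\frac{(l-1)!^{\mu_{l-1,r}}\mu_{l-1,r}!}{(l-1)!^{\mu_{l-1,r}+1}(\mu_{l-1,r}+1)!}\cdot C_\beta,
\]
and a short simplification turns the product of the middle two fractions into $\mu_{l,r}\cdot l\big/(\mu_{l-1,r}+1)$, so that $\overline{\delta}_{\mu_{l,r},0}(\mu_{l-1,r}+1)C_{\beta_-^{l,r}}=\frac{l}{P}\mu_{l,r}C_\beta$ — that is, this term contributes $\frac{l}{P}\mu_{l,r}C_\beta$, and summing over all admissible $\binom{l}{r}$ produces $\frac{1}{P}\big(\sum_{l+r\geq3,l\geq1}l\mu_{l,r}\big)C_\beta$. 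An analogous computation gives that the $\beta_b^{\hat{l},\hat{r}}$-term equals $\frac{2\mu_{2,0}}{Q}\cdot\frac{\hat{r}\mu_{\hat{l},\hat{r}}}{\text{(adjustment)}}$-type contributions, and the $\beta_d$-term handles the boxes of type $\binom{1}{1}$ together with the $2\mu_{2,0}$ piece. I would then collect everything: the three groups of contributions, multiplied by the stated coefficients, should combine so that the total ``blue'' weight $\sum l\mu_{l,r}$ reconstitutes $P=n+1$ after the $\frac1P$ is cleared, and the total ``red'' weight, together with the $2\mu_{2,0}$ bookkeeping term, reconstitutes $Q$ after the $\frac1Q$ is cleared; the right-hand side then collapses to $C_\beta$ exactly.

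The main obstacle I anticipate is purely bookkeeping: the combinatorial operations are defined by several overlapping cases (what happens when $\binom{\hat{l}}{\hat{r}}$ coincides with $\binom{2}{0}$, or with $\binom{1}{1}$, or with one of the shifted vectors $\binom{\hat{l}\pm1}{\hat{r}\mp1}$), and one must be careful that the $\overline{\delta}$-factors in the statement are exactly what is needed to suppress the degenerate instances where a ``box of type $\binom{l-1}{r}$'' or ``box of type $\binom{2}{0}$'' would fail the constraint $l+r\geq2$ or would not exist; in particular the $\overline{\delta}_{\hat{l}+\hat{r},2}$ factor removes precisely the cases where $\binom{\hat{l}-1}{\hat{r}}$ would be $\binom{0}{1}$ or $\binom{1}{0}$, and one should check these boundary terms contribute nothing on the combinatorial side as well. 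I expect that, exactly as in the Faà di Bruno setting referenced after Definition \ref{combcoeff}, the cleanest way to organise the check is to interpret both sides as counting the same set of (box-configuration, distinguished-ball) pairs: on the left one extra blue or red ball is added to the $n$-or-$(h-1)$-ball configuration counted by $C_\alpha$, and the three families $\beta_-,\beta_b,\beta_d$ correspond to the three ways the newly-added ball can sit relative to the box it lands in. If the factorial bookkeeping proves too delicate, I would fall back on this bijective reading to confirm the coefficients, but I expect the direct ratio computation above to go through.
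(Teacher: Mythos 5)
Your overall strategy is exactly the paper's: substitute the factorial formula for $C_\alpha$ into each term, compute the elementary ratio $C_{\beta_\ast}/C_\beta$ for each of the three neighbour operations, and check that the weighted sum of these ratios collapses to $1$. Your treatment of the first family is correct (modulo typos in the displayed ratio): one indeed gets $\overline{\delta}_{\mu_{\hat l,\hat r},0}(\mu_{\hat l-1,\hat r}+1)C_{\beta_-^{\hat l,\hat r}}=\tfrac{\hat l\mu_{\hat l,\hat r}}{P}C_\beta$ with $P=n+1$, summing to $\tfrac{C_\beta}{n+1}\sum_{\hat l+\hat r\geq3}\hat l\mu_{\hat l,\hat r}$, and your reading of the $\overline\delta_{\hat l+\hat r,2}$ factor is also the right one.

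There is, however, a concrete error in your bookkeeping for the other two families that would derail the computation if carried through literally. You assert that $\beta\mapsto\beta_b^{\hat l,\hat r}$ ``keeps $P$ fixed and decreases $Q$ by $1$,'' and similarly you record only a drop in $Q$ for $\beta_d$. In fact both operations decrease the blue total as well: removing a $\binom{2}{0}$ box and a $\binom{\hat l}{\hat r}$ box while adding a $\binom{\hat l+1}{\hat r-1}$ box changes the blue count by $-2-\hat l+(\hat l+1)=-1$, and deleting a $\binom{1}{1}$ box removes one blue and one red ball; this is why the paper places $\beta_b^{\hat l,\hat r}$ and $\beta_d$ in $A_{n,h-1}$ rather than $A_{n+1,h-1}$. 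Consequently the leading factorial ratio for these terms is $\tfrac{(P-1)!(Q-1)!}{P!\,Q!}=\tfrac{1}{PQ}=\tfrac{1}{(n+1)(h-1)}$, not $\tfrac1Q$; with your version the $\beta_b$ and $\beta_d$ contributions come out a factor of $n+1$ too large and the sum does not close up. Once this is corrected, the assembly is slightly more structured than ``blue weight clears $P$, red weight clears $Q$'': the $\beta_b$ sum together with the first $\beta_d$ term gives $\tfrac{2\mu_{2,0}C_\beta}{(n+1)(h-1)}\sum_{\hat l+\hat r\geq2}\hat r\mu_{\hat l,\hat r}=\tfrac{2\mu_{2,0}}{n+1}C_\beta$, and this, together with the second $\beta_d$ term $\tfrac{\mu_{1,1}}{n+1}C_\beta$, supplies precisely the missing $\hat l+\hat r=2$ summands ($\binom{2}{0}$, $\binom{1}{1}$, and the vanishing $\binom{0}{2}$) of $\tfrac{C_\beta}{n+1}\sum_{\hat l+\hat r\geq2}\hat l\mu_{\hat l,\hat r}=C_\beta$. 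Your fallback bijective reading (where does the new blue ball $n+1$ sit, and, when it sits in a $\binom{2}{0}$ box, where does the red ball $h-1$ sit) is sound and is exactly the combinatorial explanation the paper gives after Theorem \ref{coeffval}.
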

The separation of the two terms involving $\overline{\delta}_{\mu_{1,1},0}$ will be more convenient for the proof, as well as for the combinatorial explanation below.

\begin{proof}
Denote $\sum_{l+r\geq2}\mu_{l,r}$ by $h$, so that $\beta \in A_{n+1,h}$, and substitute the given values of $C_{\alpha}$ with $\alpha \in A_{n}$ into the asserted formula. For each $\hat{l}\geq1$ and $\hat{r}$ such that $\hat{l}+\hat{r}\geq3$ and $\mu_{\hat{l},\hat{r}}\geq1$ (these admissibility conditions are expressed in the $\overline{\delta}$-symbols) we take $\mu_{\hat{l}-1,\hat{r}}+1$ times $n!(h-1)!\big/\prod_{l+r\geq2}\mu_{-,l,r}^{\hat{l},\hat{r}}!(l!r!)^{\mu_{-,l,r}^{\hat{l},\hat{r}}}$. We substitute the values of $\mu_{-,l,r}^{\hat{l},\hat{r}}$, which also imply that $\beta_{-}^{\hat{l},\hat{r}} \in A_{n,h}$. For every $\sqbinom{l}{r}$ that is not $\sqbinom{\hat{l}}{\hat{r}}$ or $\sqbinom{\hat{l}-1}{\hat{r}}$ we get the same denominator as in $C_{\beta}$, and the remaining powers $\mu_{-,\hat{l},\hat{r}}^{\hat{l},\hat{r}}=\mu_{\hat{l},\hat{r}}-1$ and $\mu_{-,\hat{l}-1,\hat{r}}^{\hat{l},\hat{r}}=\mu_{\hat{l}-1,\hat{r}}+1$ of $\hat{r}!$ sum to $\mu_{\hat{l},\hat{r}}+\mu_{\hat{l}-1,\hat{r}}$ as in $C_{\beta}$. The external multiplier and the remaining terms give \[\frac{\mu_{\hat{l}-1,\hat{r}}+1}{(\hat{l}-1)!^{\mu_{\hat{l}-1,\hat{r}}+1}(\mu_{\hat{l}-1,\hat{r}}+1)!\hat{l}!^{\mu_{\hat{l},\hat{r}}-1}(\mu_{\hat{l},\hat{r}}-1)!}=
\frac{\hat{l}\mu_{\hat{l},\hat{r}}}{(\hat{l}-1)!^{\mu_{\hat{l}-1,\hat{r}}}\mu_{\hat{l}-1,\hat{r}}!\hat{l}!^{\mu_{\hat{l},\hat{r}}}\mu_{\hat{l},\hat{r}}!},\] so that the expression arising from such $\hat{l}$ and $\hat{r}$ is $\frac{\hat{l}\mu_{\hat{l},\hat{r}}}{n+1}C_{\beta}$ (recall that $C_{\beta}$ has $(n+1)!$ in the numerator in Definition \ref{combcoeff}, but here we only have $n!$). Moreover, the multiplier $\hat{l}\mu_{\hat{l},\hat{r}}$ vanishes wherever $\overline{\delta}_{\mu_{\hat{l},\hat{r}},0}\overline{\delta}_{\hat{l},0}$ does, so that it suffices to put the restriction $\hat{l}+\hat{r}\geq3$ (from $\overline{\delta}_{\hat{l}+\hat{r},2}$) and deduce that the total contribution of these terms is $\frac{C_{\beta}}{n+1}\sum_{\hat{l}+\hat{r}\geq3}\hat{l}\mu_{\hat{l},\hat{r}}$.

If $\mu_{2,0}\geq1$ then consider now $\hat{r}\geq1$ and $\hat{l}$ such that $\hat{l}+\hat{r}\geq2$, $\sqbinom{\hat{l}}{\hat{r}}\neq\sqbinom{1}{1}$, and $\mu_{\hat{l},\hat{r}}\geq1$ (from the $\overline{\delta}$-symbols again). Then $h\geq2$ as well, the element $\beta_{b}^{\hat{l},\hat{r}}$ is in $A_{n,h-1}$, and the contribution that we get is $(\hat{l}+1)(\mu_{\hat{l}+1,\hat{r}-1}+1)$ times $n!(h-2)!\big/\prod_{l+r\geq2}(l!r!)^{\mu_{b,l,r}^{\hat{l},\hat{r}}}\mu_{b,l,r}^{\hat{l},\hat{r}}!$. Once again the denominators arising from every vector $\sqbinom{l}{r}$ other than $\sqbinom{\hat{l}}{\hat{r}}$, $\sqbinom{\hat{l}+1}{\hat{r}-1}$, or $\sqbinom{2}{0}$ are the the same ones as in $C_{\beta}$, but the external multiplier and the remaining denominators combine to \[\frac{(\hat{l}+1)(\mu_{\hat{l}+1,\hat{r}-1}+1)}
{[(\hat{l}+1)!(\hat{r}-1)!]^{\mu_{\hat{l}+1,\hat{r}-1}+1}(\mu_{\hat{l}+1,\hat{r}-1}+1)![\hat{l}!\hat{r}!]^{\mu_{\hat{l},\hat{r}}-1}(\mu_{\hat{l},\hat{r}}-1)!2^{\mu_{2,0}-1}(\mu_{2,0}-1)!}.\] This gives us $2\mu_{2,0}\hat{r}\mu_{\hat{l},\hat{r}}$ over the denominators appearing in $C_{\beta}$ that are associated with these three values of $\sqbinom{l}{r}$, or in total $\frac{2\mu_{2,0}\hat{r}\mu_{\hat{l},\hat{r}}}{(n+1)(h-1)}C_{\beta}$ because here we also have only $(h-2)!$ in the numerator. As the numerator here vanishes when $\overline{\delta}_{\mu_{2,0},0}\overline{\delta}_{\hat{r},0}\overline{\delta}_{\mu_{\hat{l},\hat{r}},0}=0$, only the restriction $\sqbinom{\hat{l}}{\hat{r}}\neq\sqbinom{1}{1}$ from $\overline{\delta}_{\sqbinom{\hat{l}}{\hat{r}},\sqbinom{1}{1}}$ remains significant, and the total contribution here is $\frac{2\mu_{2,0}C_{\beta}}{(n+1)(h-1)}\sum_{\sqbinom{\hat{l}}{\hat{r}}\neq\sqbinom{1}{1}}\hat{r}\mu_{\hat{l},\hat{r}}$.

In the last two terms, appearing when $\mu_{1,1}\geq1$ (hence $h\geq2$ again), we have $2\mu_{2,0}$ or $h-1$ times $n!(h-2)!\big/\prod_{l+r\geq2}(l!r!)^{\mu_{d,l,r}}\mu_{d,l,r}!$, since $\beta_{d} \in A_{n,h-1}$. The denominators coming from $\sqbinom{l}{r}\neq\sqbinom{1}{1}$ are the ones appearing in $C_{\beta}$, and the external multiplier and the terms associated with the vector $\sqbinom{1}{1}$ combine to $\frac{2\mu_{2,0}}{(\mu_{1,1}-1)!}=\frac{2\mu_{2,0}\mu_{1,1}}{\mu_{1,1}!}$ and $\frac{h-1}{(\mu_{1,1}-1)!}=\frac{\mu_{1,1}(h-1)}{\mu_{1,1}!}$. Once again we have the denominator $(n+1)(h-1)$ under $C_{\beta}$ (since the numerator here is $(h-2)!$ once more), and as $\mu_{1,1}$ makes the multiplier $\overline{\delta}_{\mu_{1,1},0}$ redundant, these contributions are $\frac{2\mu_{2,0}\mu_{1,1}C_{\beta}}{(n+1)(h-1)}$ and $\frac{\mu_{1,1}C_{\beta}}{n+1}$ respectively.

The total expression that we therefore consider is \[\frac{C_{\beta}}{n+1}\sum_{\hat{l}+\hat{r}\geq3}\hat{l}\mu_{\hat{l},\hat{r}}+\frac{2\mu_{2,0}C_{\beta}}{(n+1)(h-1)}\sum_{\sqbinom{\hat{l}}{\hat{r}}\neq\sqbinom{1}{1}}\hat{r}\mu_{\hat{l},\hat{r}}+
\frac{2\mu_{2,0}\mu_{1,1}C_{\beta}}{(n+1)(h-1)}+\frac{\mu_{1,1}C_{\beta}}{n+1},\] which is a sum of four terms. The third term is precisely the summand associated with $\sqbinom{\hat{l}}{\hat{r}}=\sqbinom{1}{1}$, which is missing in the second term, and as $\sum_{\hat{l}+\hat{r}\geq2}\hat{r}\mu_{\hat{l},\hat{r}}=h-1$ by Definition \ref{Andef} and Remark \ref{setpart}, these two terms reduce to $\frac{2\mu_{2,0}C_{\beta}}{n+1}$. But this is $\hat{l}\mu_{\hat{l},\hat{r}}\frac{C_{\beta}}{n+1}$ with $\sqbinom{\hat{l}}{\hat{r}}=\sqbinom{2}{0}$, the fourth term is $\hat{l}\mu_{\hat{l},\hat{r}}\frac{C_{\beta}}{n+1}$ for $\sqbinom{\hat{l}}{\hat{r}}=\sqbinom{1}{1}$, the term $\hat{l}\mu_{\hat{l},\hat{r}}\frac{C_{\beta}}{n+1}$ associated with $\sqbinom{\hat{l}}{\hat{r}}=\sqbinom{0}{2}$ vanishes, and these are all the vectors $\sqbinom{\hat{l}}{\hat{r}}$ with $\hat{l}+\hat{r}=2$. The total expression is thus $\frac{C_{\beta}}{n+1}\sum_{\hat{l}+\hat{r}\geq2}\hat{l}\mu_{\hat{l},\hat{r}}$, which is just $C_{\beta}$ since $\beta \in A_{n+1}$ (Definition \ref{Andef} again). This proves the proposition.
\end{proof}

Our final result is now as follows.
\begin{thm}
Let $A_{n}$ be as in Definition \ref{Andef}, and for every element $\alpha$ in that set consider the constant $C_{\alpha}$ from Definition \ref{combcoeff}. Then the associated expression $\prod_{l+r\geq2}(\Delta_{l}f_{y^{r}})^{m_{l,r}}\big/f_{y}^{n+\sum_{l+r\geq2}m_{l,r}}$ from Corollary \ref{formwden} appears in $y^{(n)}$ with the coefficient $(-1)^{\sum_{l+r\geq2}m_{l,r}}C_{\alpha}$, namely we have
\[y^{(n)}=\sum_{\alpha \in A_{n}}\frac{(-1)^{\sum_{l+r\geq2}m_{l,r}}n!\big(\sum_{l+r\geq2}rm_{l,r}\big)!}{\prod_{l+r\geq2}l!^{m_{l,r}}r!^{m_{l,r}}m_{l,r}!}\cdot\frac{\prod_{l+r\geq2}(\Delta_{l}f_{y^{r}})^{m_{l,r}}}{f_{y}^{n+\sum_{l+r\geq2}m_{l,r}}}.\]
\label{coeffval}
\end{thm}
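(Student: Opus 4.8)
The plan is to prove the stated formula by induction on $n\geq2$, playing off against each other the two recursions already in hand: Corollary \ref{contcbeta} for the analytically defined coefficients $c_\alpha$ of Corollary \ref{formwden}, and Proposition \ref{indcomb} for the combinatorial numbers $C_\alpha$ of Definition \ref{combcoeff}. In view of Corollary \ref{formwden} and Definition \ref{combcoeff} it suffices to prove the single identity $c_\alpha=(-1)^{h}C_\alpha$ for every $\alpha=\{m_{l,r}\}_{\{l+r\geq2\}}\in A_{n,h}$, where $h=\sum_{l+r\geq2}m_{l,r}$ as in Remark \ref{setpart}. For the base case $n=2$ the set $A_2$ consists of the single element with $m_{2,0}=1$ and all other multiplicities zero; here $h=1$ and $\sum_{l+r\geq2}rm_{l,r}=0$, so Definition \ref{combcoeff} gives $C_\alpha=2!\cdot0!\big/\big(2!^{1}\cdot0!^{1}\cdot1!\big)=1$, whence $(-1)^{h}C_\alpha=-1$, agreeing with the value $c_\alpha=-1$ read off from Equation \eqref{yder2}. (As consistency checks one may likewise match $n=3$ against Equations \eqref{yder3D3}--\eqref{yder3rest} and $n=4$ against Equation \eqref{yder4}.)

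For the inductive step, suppose $c_\alpha=(-1)^{h}C_\alpha$ for all $\alpha\in A_{n,h}$ and all relevant $h$, and fix $\beta=\{\mu_{l,r}\}_{\{l+r\geq2\}}\in A_{n+1,h}$ with $h=\sum_{l+r\geq2}\mu_{l,r}$. By Lemma \ref{dualnot} each of the elements $\beta_{-}^{\hat l,\hat r}$, $\beta_{b}^{\hat l,\hat r}$, $\beta_{d}$ appearing on the right side of Corollary \ref{contcbeta} lies in $A_n$, so that corollary is a genuine downward formula for $c_\beta$ in terms of quantities covered by the inductive hypothesis. The only bookkeeping needed is the behaviour of the total multiplicity under the three operations: passing from $\beta$ to $\beta_{-}^{\hat l,\hat r}$ merely replaces one vector $\binom{\hat l}{\hat r}$ by $\binom{\hat l-1}{\hat r}$ and so keeps the total multiplicity at $h$ (indeed $\beta_{-}^{\hat l,\hat r}\in A_{n,h}$), whereas passing to $\beta_{b}^{\hat l,\hat r}$ or to $\beta_{d}$ lowers it by exactly one, i.e.\ $\beta_{b}^{\hat l,\hat r},\beta_{d}\in A_{n,h-1}$, as recorded in the proof of Proposition \ref{indcomb}.

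Now substitute into Corollary \ref{contcbeta} the inductive values $c_{\beta_{-}^{\hat l,\hat r}}=(-1)^{h}C_{\beta_{-}^{\hat l,\hat r}}$, $c_{\beta_{b}^{\hat l,\hat r}}=(-1)^{h-1}C_{\beta_{b}^{\hat l,\hat r}}=-(-1)^{h}C_{\beta_{b}^{\hat l,\hat r}}$, and $c_{\beta_{d}}=-(-1)^{h}C_{\beta_{d}}$. The explicit minus sign in front of the $\beta_{b}$-term and the overall minus sign in front of the $\beta_{d}$-term are each cancelled by the extra $(-1)$ just produced, so a common factor $(-1)^{h}$ can be pulled out of the entire right-hand side. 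What is left inside is precisely the right-hand side of Proposition \ref{indcomb}: the sum over $\hat l+\hat r\geq2$ matches term by term, and the coefficient $\sum_{l+r\geq2}r\mu_{l,r}+2\mu_{2,0}$ multiplying $c_{\beta_{d}}$ in Corollary \ref{contcbeta} is exactly the sum of the two coefficients $2\mu_{2,0}$ and $\sum_{l+r\geq2}r\mu_{l,r}$ multiplying $C_{\beta_{d}}$ in Proposition \ref{indcomb}, which is why those two terms were kept separate there. Hence $c_\beta=(-1)^{h}C_\beta$, completing the induction and thus the theorem.

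The genuine content of the argument has already been discharged in Corollary \ref{contcbeta} and Proposition \ref{indcomb}; what remains is essentially sign-keeping. Accordingly the step most likely to conceal a slip is the parity count above: one must be certain that the total multiplicity is invariant under $\beta\mapsto\beta_{-}^{\hat l,\hat r}$ but drops by one under $\beta\mapsto\beta_{b}^{\hat l,\hat r}$ and $\beta\mapsto\beta_{d}$, so that the $(-1)$ contributed by the latter two exactly absorbs the explicit minus signs in Corollary \ref{contcbeta}. Once this is in place the two recursions coincide and the result follows.
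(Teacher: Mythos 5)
Your proof is correct and follows essentially the same route as the paper: induction on $n$ reducing to $c_{\alpha}=(-1)^{h}C_{\alpha}$, verified at $n=2$ and propagated by matching the recursion of Corollary \ref{contcbeta} against Proposition \ref{indcomb}, with the sign bookkeeping resting on $\beta_{-}^{\hat{l},\hat{r}}\in A_{n,h}$ versus $\beta_{b}^{\hat{l},\hat{r}},\beta_{d}\in A_{n,h-1}$. Your write-up just makes the sign cancellation more explicit than the paper's terser phrasing.
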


\begin{proof}
Corollary \ref{formwden} reduces us to proving that $c_{\alpha}=(-1)^{\sum_{l+r\geq2}m_{l,r}}C_{\alpha}$ for every $\alpha \in A_{n}$, a claim that we shall prove by induction on $n$. When $n=2$ we have the single element $\alpha$ with $m_{2,0}=1$ and $m_{l,r}=0$ for every other $l$ and $r$, for which it is easily seen in Definition \ref{combcoeff} that $C_{\alpha}=1$, and the claim is true since $c_{\alpha}=-1$ and $\sum_{l,r}m_{l,r}=1$. Assume now that the assertion holds for every $\alpha \in A_{n}$, take $\beta=\{\mu_{l,r}\}_{\{l+r\geq2\}} \in A_{n+1}$, and express $c_{\beta}$ as in Corollary \ref{contcbeta}. Assume again that $\beta \in A_{n+1,h}$, so that the proof of Proposition \ref{indcomb} implies that every well-defined element of the form $\beta_{-}^{\hat{l},\hat{r}}$ lies in $A_{n,h}$, while if $\beta_{b}^{\hat{l},\hat{r}}$ or $\beta_{d}$ are defined then they lie in $A_{n,h-1}$. Taking the signs into account, we thus have to show that the number $C_{\beta}$ from Definition \ref{combcoeff} can be expressed as in Corollary \ref{contcbeta}, but with each $c_{\alpha}$ replaced by $C_{\alpha}$ and the two external minus signs transformed into pluses. As this is precisely the content of Proposition \ref{indcomb}, this completes the proof of the theorem.
\end{proof}

We can compare Theorem \ref{coeffval} with the explicit expressions we already have for $n=3$ and $n=4$. One element of $A_{3}$ has $m_{3,0}=1$ and $m_{l,r}=0$ for every other $l$ and $r$, for which $\sum_{l,r}m_{l,r}=1$ and Definition \ref{combcoeff} gives $C_{\alpha}=1$, and indeed we had $c_{\alpha}=-1$. In the other element the multiplicities $m_{2,0}$ and $m_{1,1}$ are 1 and the rest vanish, so that $\sum_{l,r}m_{l,r}=2$, and the value $C_{\alpha}=3$ from Definition \ref{combcoeff} is again in correspondence with $c_{\alpha}$ being 3 as well. As for $A_{4}$, in $A_{4,1}$ there is the single element with the only non-vanishing multiplicity $m_{4,0}=1$, the set $A_{4,2}$ contains the element having $m_{3,0}=m_{1,1}=1$ and the element in which $m_{2,1}=m_{2,2}=1$, and the elements of $A_{4,3}$ are the one with $m_{2,0}=2$ and $m_{0,2}=1$ and the one with $m_{2,0}=1$ and $m_{1,1}=2$ (and all the other $m_{r,l}$'s vanish in each of them). Comparing the values of $h$ and the respective values $\frac{4!}{4!}=1$, $\frac{4!}{3!}=4$, $\frac{4!}{2!2!}=6$, $\frac{4!2!}{2!^{2}2!2!}=3$, and $\frac{4!2!}{2!2!}=12$ from Definition \ref{combcoeff} (omitting every $0!$ and $1!$) with the coefficients in Equation \eqref{yder4} verifies Theorem \ref{coeffval} for $n=4$ as well.

Since the coefficients $C_{\alpha}$ from Definition \ref{combcoeff} have a combinatorial meaning, let us review the proof of Proposition \ref{indcomb} (and with it of Theorem \ref{coeffval}) combinatorially. Assuming that $\beta=\{\mu_{l,r}\}_{l+r\geq2} \in A_{n+1,h}$, we need to count the partitions of $n+1$ blue balls and $h-1$ red balls into $h$ boxes such that the number of boxes containing $l$ blue balls and $r$ red balls is $\mu_{l,r}$, and recall that we only work with partitions in which boxes contain two balls (of any color) or more. First we consider those partitions in which the blue ball with number $n+1$ lies in a box containing at least two other balls, i.e., it comes from a box with $\hat{l}$ blue balls (and then $\hat{l}\geq1$ because our ball in question is there) and $\hat{r}$ red balls such that $\hat{l}+\hat{r}\geq3$. From such partitions we can simply take this ball out, yielding a partition that is associated with $\beta_{-}^{\hat{l},\hat{r}}$. On the other hand, given a partition of type $\beta_{-}^{\hat{l},\hat{r}}$, if we want to obtain a partition of type $\beta$ again we have to put the ball number $n+1$ into one of the boxes associated with $\sqbinom{\hat{l}-1}{\hat{r}}$, and there are $\mu_{-,\hat{l}-1,\hat{r}}^{\hat{l},\hat{r}}=\mu_{\hat{l}-1,\hat{r}}+1$ such boxes. Hence the first term in Proposition \ref{indcomb} counts partitions of type $\beta$ in which the blue ball of number $n+1$ lies in a box that has at least 3 balls in total.

Consider now those partitions in which the ball $n+1$ lies only with another red ball (which can happen only if $\mu_{1,1}\geq1$ and hence $h\geq2$). Its box-mate can be any of the $h-1$ red balls, so that choosing this ball gives us a multiplier of $h-1$, and by taking out the entire box we get a partition, now of $n$ blue balls and $h-2$ red balls, that is of type $\beta_{d}$. On the other hand, for every choice of partition of $\beta_{d}$, and every choice of a red ball, we can simply add a new box with the blue ball $n+1$ and the chosen red ball, and get a partition of type $\beta$. Hence the fourth term in Proposition \ref{indcomb} corresponds to partitions of type $\beta$ where $n+1$ has a single box-mate, which is red.

For counting the partitions having the blue ball $n+1$ with a single blue box-mate (which occur when $\mu_{2,0}\geq1$, implying also that $h\geq2$ since $\beta \in A_{n+1,h}$ for $n\geq2$) in terms of partitions coming from elements of $A_{n}$ we cannot simply take out the box containing $n+1$. What we do is find the red ball $h-1$ (which exists since $h\geq2$), replace it by the box-mate of $n+1$, and throw away the blue $n+1$, the red $h-1$, and the box. This red ball can be in a box with any parameters $\hat{r}\geq1$ (since the red ball $h-1$ is there) and $\hat{l}$ such that $\hat{l}+\hat{r}\geq2$, and note that our operation replaces this box by a box associated with $\sqbinom{\hat{l}+1}{\hat{r}-1}$ (which we also recall that contains the previous blue box-mate of $n+1$), and we took out a box associated with $\sqbinom{2}{0}$. This partition is therefore of type $\beta_{t}^{\hat{l},\hat{r}}$ when $\sqbinom{\hat{l}}{\hat{r}}\neq\sqbinom{1}{1}$, but of type $\beta_{d}$ again when $\sqbinom{\hat{l}}{\hat{r}}=\sqbinom{1}{1}$. Conversely, given a partition associated with $\beta_{t}^{\hat{l},\hat{r}}$ (when $\sqbinom{\hat{l}}{\hat{r}}\neq\sqbinom{1}{1}$), in order to create a partition of the form $\beta$ we need to choose the box of type $\sqbinom{\hat{l}+1}{\hat{r}-1}$ (of which we have $\mu_{b,\hat{l}+1,\hat{r}-1}^{\hat{l},\hat{r}}=\mu_{\hat{l}+1,\hat{r}-1}+1$) and one of the $\hat{l}+1$ blue balls that it contains, add the red ball $h-1$ to it, and take the chosen blue ball to be the box-mate of $n+1$ in an additional box. Doing this with a partition of type $\beta_{d}$ (so that $\sqbinom{\hat{l}}{\hat{r}}=\sqbinom{1}{1}$) works by the same argument, apart for the number of possible boxes now being $\mu_{d,2,0}=\mu_{2,0}$ (and with $\hat{l}+1=2$). Therefore the third term in Proposition \ref{indcomb} describes partitions of type $\beta$ where $n+1$ has a single, blue box-mate and the red ball $h-1$ also has a single blue box-mate, while the second term there counts those partitions in which $n+1$ still has a single blue box-mate but the red ball $h-1$ lies in a box of any other kind.

In total, the combinatorial explanation of Proposition \ref{indcomb} is that in partitions of type $\beta$ the blue ball can be either in a $\sqbinom{\hat{l}}{\hat{r}}$-box with $\hat{l}\geq1$ and $\hat{l}+\hat{r}\geq3$ (and $\mu_{\hat{l},\hat{r}}\geq1$), or in a $\sqbinom{1}{1}$-box when $\mu_{1,1}\geq1$, or in a $\sqbinom{2}{0}$-box $\mu_{2,0}\geq1$ (since only $\sqbinom{\hat{l}}{\hat{r}}$-boxes with $\hat{l}+\hat{r}\geq2$ are allowed and we must have $\hat{l}\geq1$ since the ball is blue). In the latter case, where $h$ must be at least 2, the red ball $h-1$ can either be in a $\sqbinom{1}{1}$-box (again, when $\mu_{1,1}\geq1$) or in any $\sqbinom{\hat{l}}{\hat{r}}$-box with $\sqbinom{\hat{l}}{\hat{r}}\neq\sqbinom{1}{1}$, $\mu_{\hat{l},\hat{r}}\geq1$, $\hat{l}+\hat{r}\geq2$, and $\hat{r}\geq1$ (since it is red). Moreover, these options are mutually exclusive, and counting each one of them leads to a multiple of the number of partitions of the appropriate type $\alpha \in A_{n}$ (which is $\beta_{-}^{\hat{l},\hat{r}}$, $\beta_{d}$, $\beta_{d}$ again, and $\beta_{b}^{\hat{l},\hat{r}}$, respectively). This is the combinatorial proof of Proposition \ref{indcomb} (and therefore also of Theorem \ref{coeffval}).

\section{The Meaning of the Expressions $\Delta_{l}f_{y^{r}}$ \label{Expansion}}

Let us consider the expression from Theorem \ref{coeffval} in the case where $f_{x}=0$. In this case only the term with $j=0$ in Definition \ref{Deltalg} survives, and every $\Delta_{l}f_{y^{r}}$ becomes just $f_{x^{l}y^{r}} \cdot f_{y}^{l}$. Recalling that $\sum_{l,r}lm_{l,r}=n$, we find that in this case the result of that theorem reduces to
\begin{equation}
y^{(n)}=\sum_{\alpha \in A_{n}}\frac{(-1)^{\sum_{l+r\geq2}m_{l,r}}n!\big(\sum_{l+r\geq2}rm_{l,r}\big)!}{\prod_{l+r\geq2}l!^{m_{l,r}}r!^{m_{l,r}}m_{l,r}!}\cdot\frac{\prod_{l+r\geq2}f_{x^{l}y^{r}}^{m_{l,r}}}{f_{y}^{\sum_{l+r\geq2}m_{l,r}}}. \label{fx=0}
\end{equation}
The first conclusion that we draw from that expression is that when $f_{x}=0$ our expression for $y^{(n)}$ coincides with that from \cite{[J3]}. Indeed, in this case only the terms from that reference whose corresponding partitions do not involve singletons remain (since the allowed singletons produce powers of $f_{x}$), and for these terms the coefficients there are precisely those appearing in our Definition \ref{combcoeff} (with our blue balls being the ``small numbers'' there, the red balls are the ``big numbers'', and the index $k$ is denoted here by $h$).

Assuming again that $f_{x}(x_{0},y_{0})$ is arbitrary, consider now the function $\varphi(x,z)$ defined to be $f(x,z+\lambda x)$ for some scalar $\lambda$. Differentiating with respect to $z$ and substituting $z=z_{0}=y_{0}-\lambda x_{0}$ shows that $\varphi_{z}(x_{0},y_{0}-\lambda x_{0})=f_{y}(x_{0},y_{0})\neq0$, so that the equality $\varphi(x,z)=0$ determines $z$ as a function of $x$ around $x_{0}$ as well (with $z(x_{0})=z_{0}=y_{0}-\lambda x_{0}$). Comparing with $y=y(x)$ arising from $f(x,y)=0$ implies that $y(x)=z(x)+\lambda (x-x_{0})$. Moreover, we find that $\varphi_{x}=f_{x}+\lambda f_{y}$, so that by setting $\lambda=y'=-\frac{f_{x}}{f_{y}}$ (again all the functions are evaluated at $x_{0}$ from now on) we find that $z'(x_{0})=0$. We can now prove the following relation.
\begin{lem}
With this value of $\lambda$ we have the equality $\varphi_{x^{l}z^{r}}=\Delta_{l}f_{y^{r}}/f_{y}^{l}$ for every $l$ and $r$. \label{trans}
\end{lem}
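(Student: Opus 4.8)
The plan is to compute the partial derivatives of $\varphi$ directly from the chain rule, express them as binomial sums of partial derivatives of $f$, and then recognize those sums as the expressions $\Delta_{l}f_{y^{r}}$ of Definition \ref{Deltalg}.

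First I would record the basic chain-rule fact: for any sufficiently smooth function $h$ of two variables, the function $(x,z)\mapsto h(x,z+\lambda x)$ has $x$-partial $(h_{x}+\lambda h_{y})(x,z+\lambda x)$ and $z$-partial $h_{y}(x,z+\lambda x)$. The key point is that $\lambda$ is a fixed scalar, so on the level of $f$ the operator ``differentiate $\varphi$ in $x$'' is the constant-coefficient operator $\partial_{x}+\lambda\partial_{y}$, while ``differentiate $\varphi$ in $z$'' is just $\partial_{y}$. These two operators on $f$ commute and have constant coefficients, so iterating and using the ordinary binomial theorem gives $\varphi_{x^{l}z^{r}}(x,z)=\big[(\partial_{x}+\lambda\partial_{y})^{l}\partial_{y}^{r}f\big](x,z+\lambda x)=\sum_{j=0}^{l}\binom{l}{j}\lambda^{j}f_{x^{l-j}y^{j+r}}(x,z+\lambda x)$ for all $l$ and $r$.

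Next I would evaluate at $x=x_{0}$ and $z=z_{0}=y_{0}-\lambda x_{0}$, so that $z+\lambda x=y_{0}$ and every partial of $f$ on the right is taken at the base point $(x_{0},y_{0})$; with the chosen value $\lambda=-f_{x}/f_{y}$ this turns the sum into $\sum_{j=0}^{l}(-1)^{j}\binom{l}{j}f_{x}^{j}f_{y}^{-j}f_{x^{l-j}y^{j+r}}$. Multiplying through by $f_{y}^{l}$ and noting that $(f_{y^{r}})_{x^{l-j}y^{j}}$ is exactly $f_{x^{l-j}y^{j+r}}$, the right-hand side becomes $\sum_{j=0}^{l}(-1)^{j}\binom{l}{j}(f_{y^{r}})_{x^{l-j}y^{j}}f_{x}^{j}f_{y}^{l-j}$, which is precisely $\Delta_{l}(f_{y^{r}})$ in the notation of Definition \ref{Deltalg} (take $g=f_{y^{r}}$ there). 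Dividing back by $f_{y}^{l}$ yields $\varphi_{x^{l}z^{r}}=\Delta_{l}f_{y^{r}}/f_{y}^{l}$, as asserted.

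There is no real obstacle here; the computation is a bookkeeping exercise once it is set up correctly. The one point that deserves care is that the binomial expansion in the middle step is a generic identity valid at every point and for every scalar $\lambda$, whereas the specialization $\lambda=-f_{x}/f_{y}$ is performed only after restricting to the base point, so the equality of Lemma \ref{trans} is an equality of numbers at $(x_{0},y_{0})$, matching the convention used throughout the paper that all $\Delta$-expressions are evaluated there. It is equally worth emphasizing that $\lambda$ must be treated as a constant, independent of $x$ --- this is exactly what makes $\partial_{x}+\lambda\partial_{y}$ a constant-coefficient operator and hence what legitimizes the clean binomial expansion.
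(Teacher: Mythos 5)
Your proof is correct and follows essentially the same route as the paper: both establish $\varphi_{x^{l}z^{r}}=\sum_{j=0}^{l}\binom{l}{j}\lambda^{j}f_{x^{l-j}y^{r+k}}$ (the paper by explicit induction on $r$ and then on $l$ with the binomial identity, you by viewing $\partial_{x}$ on $\varphi$ as the constant-coefficient operator $\partial_{x}+\lambda\partial_{y}$ on $f$ and invoking the binomial theorem for commuting operators) and then substitute $\lambda=-f_{x}/f_{y}$ to recognize $\Delta_{l}f_{y^{r}}/f_{y}^{l}$. Your remark about specializing $\lambda$ only after evaluating at the base point is a worthwhile clarification but does not change the argument.
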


\begin{proof}
A very simple induction on $r$ shows that $\varphi_{z^{r}}(x,z)=f_{y^{r}}(x,y)$ for every $r$ when $y=z+\lambda x$. On the other hand, differentiating by $x$ gives (as we have seen above) that $\varphi_{xz^{r}}(x,z)=f_{xy^{r}}(x,y)+\lambda f_{y^{r+1}}$. The standard inductive argument (with the usual binomial identity) therefore proves that our expression $\varphi_{x^{l}z^{r}}$ is $\sum_{k=0}^{l}\binom{l}{k}\lambda^{k}f_{x^{l-k}y^{r+k}}$. As substituting $\lambda=-\frac{f_{x}}{f_{y}}$ produces the sum from Definition \ref{Deltalg} with $g=f_{y^{r}}$ divided by $f_{y}^{l}$, this proves the lemma.
\end{proof}

It therefore follows that the formula from Equation \eqref{fx=0} (which involves only powers of derivatives of $f$ in an elementary manner) in case the first derivative vanishes implies the general formula from Theorem \ref{coeffval}. Indeed, with the special value of $\lambda$ the function $z$ was seen to satisfy $z'(x_{0})=0$, and therefore $z^{(n)}$ can be evaluated as in Equation \eqref{fx=0} (with the derivatives of $f$ with respect to $y$ replaced by those of $\varphi$ with respect to $z$). But $y$ and $z$ differ by a linear function of $x$, so that $y^{(n)}=z^{(n)}$ for any $n\geq2$. By writing the expression $\varphi_{x^{l}z^{r}}$ appearing in Equation \eqref{fx=0} as in Lemma \ref{trans} (and in particular $\varphi_{z}$ from the denominator as $f_{y}$), and recalling that the total denominators from that lemma is $f_{y}^{n}$ by the known value of $\sum_{l,r}lm_{l,r}$, we recover the formula from Theorem \ref{coeffval}.

We would like to compare our formula with the one from \cite{[J3]} also when $f_{x}'\neq0$. Note that in that reference the formula for $y^{(n)}$ is given in terms of the more elementary products, so that in a notation similar to Corollary \ref{formwden} it is written as $\sum_{\gamma=\{s_{p,t}\}_{p,t} \in B_{n}}\Big[d_{\gamma}\prod_{p,t}f_{x^{p}y^{t}}^{s_{p,t}}\Big/f_{y}^{\sum_{p,t}s_{p,t}}\Big]$. Here $B_{n}$ is the set of partitions $\{s_{p,t}\}_{p,t}$ (with $p$ and $t$ non-negative integers) satisfying $\sum_{p,t}ps_{p,t}=n$ and $\sum_{p,t}(t-1)s_{p,t}=-1$ with $s_{0,0}=s_{0,1}=0$ (this set contains all the partitions coming from $A_{n}$ by adding $s_{1,0}=0$ to every element of that set, but usually contains also elements with $s_{1,0}>0$). As in Remark \ref{setpart}, this set is divided into the disjoint union of the sets $B_{n,k}$ according to the value $k$ of the sum $\sum_{p,t}s_{p,t}$, which here satisfies $1 \leq k \leq2n-1$. To see this, observe that the inequality $k\geq1$ is immediate once more, the sum $\sum_{p+t\geq2}(p+t)s_{p,t}$ equals $n+k-1-s_{1,0}$ and is bounded from below by $2\sum_{p+t\geq2}s_{p,t}=2(k-s_{1,0})$, and the second inequality follows since $s_{1,0}\leq\sum_{p,t}ps_{p,t}=n$. The coefficient $d_{\gamma}$ for $\gamma \in B_{n}$ equals $(-1)^{k}D_{\gamma}$ with $D_{\gamma}>0$ in case $\gamma \in B_{n,k}$, where the number $D_{\gamma}$ for $\gamma=\{s_{p,t}\}_{p,t} \in B=\bigcup_{n=1}^{\infty}B_{n}$ is described combinatorially in \cite{[J3]} and algebraically in \cite{[Wi]}. An argument similar to Definition \ref{combcoeff} shows that for $\gamma \in B_{n,k}$ we have
\begin{equation}
d_{\gamma}=(-1)^{k}D_{\gamma}=(-1)^{k}\Bigg(\sum_{p,t}ps_{p,t}\Bigg)!\Bigg(\sum_{p,t}ts_{p,t}\Bigg)!\Bigg/\prod_{p,t}p!^{s_{p,t}}t!^{s_{p,t}}s_{p,t}! \label{dgamma}
\end{equation}
Here the set $B_{1}=B_{1,1}$ is also defined, and it consists of the unique element in which $s_{1,0}=1$ and $s_{p,t}=0$ for any other $p$ and $t$, corresponding to the initial formula $y'=-\frac{f_{x}}{f_{y}}$ (since Equation \eqref{dgamma} yields $d_{\gamma}=-1$ there). In general, the result of \cite{[J3]} and \cite{[Wi]} is as follows.
\begin{thm}
Using the notation from the previous paragraph, we have
\[y^{(n)}=\sum_{k=1}^{2n-1}\sum_{\gamma=\{s_{p,t}\}_{p,t} \in B_{n,k}}\frac{(-1)^{k}D_{\gamma}}{f_{y}^{k}}\prod_{p,t}f_{x^{p}y^{t}}^{s_{p,t}},\] where $D_{\gamma}$ is given in Equation \eqref{dgamma}. \label{claselts}
\end{thm}

For the comparison, namely for proving Theorem \ref{claselts} from Theorem \ref{coeffval}, we introduce, for every element $\gamma=\{s_{p,t}\}_{p,t} \in B$, the set $Z_{\gamma}$ consisting of all those systems of numbers $\{q_{p,t,j}\}_{p+t\geq2,0 \leq j \leq t}$ such that $\sum_{j=0}^{t}q_{p,t,j}=s_{p,t}$ wherever $p+t\geq2$ and $\sum_{p+t\geq2}\sum_{j=0}^{t}jq_{p,t,j}=s_{1,0}$.
\begin{lem}
Expanding the formula from Theorem \ref{coeffval} in terms of the derivatives $f_{x^{p}y^{t}}$, and writing each $\gamma \in B_{n,k}$ as $\{s_{p,t}\}_{p,t}$, expresses $y^{(n)}$ as the sum of $1 \leq k \leq2n-1$ of \[\sum_{\gamma \in B_{n,k}}\Bigg[\frac{(-1)^{k}n!(k-s_{1,0}-1)!}{\prod_{p,t}(p!t!)^{s_{p,t}}}\times\sum_{\{q_{p,t,j}\}_{p,t,j} \in Z_{\gamma}}\prod_{j=0}^{t}\frac{1}{q_{p,t,j}!}\binom{t}{j}^{q_{p,t,j}}\Bigg]\frac{\prod_{p,t}f_{x^{p}y^{t}}^{s_{p,t}}}{f_{y}^{k}}.\] \label{expDeltalfyr}
\end{lem}

\begin{proof}
We expand each of the expressions $\Delta_{l}f_{y^{r}}$ appearing in Theorem \ref{coeffval} as in Definition \ref{Deltalg}, and take the $m_{l,r}$th power. The Multinomial Theorem produces, for every system of $l+1$ numbers $\{\tilde{q}_{l,r,j}\}_{j=0}^{l}$ with $\sum_{j=0}^{l}\tilde{q}_{l,r,j}=m_{l,r}$, the expression $\prod_{j=0}^{l}\big[(-1)^{j}\binom{l}{j}f_{x^{l-j}y^{r+j}} \cdot f_{x}^{j}f_{y}^{l-j}\big]^{\tilde{q}_{l,r,j}}$ multiplied by the multinomial coefficient $m_{l,r}!\big/\prod_{j=0}^{l}\tilde{q}_{l,r,j}!$. Assuming that the $m_{l,r}$'s come from an element $\alpha \in A_{n,h}$, we take the product over $l$ and $r$, and multiply by the coefficients $c_{\alpha}=(-1)^{h}C_{\alpha}$ from Theorem \ref{coeffval} and Definition \ref{combcoeff}. The powers of $l!$ cancel, the factors $m_{l,r}!$ cancel, and the terms arising from $f_{y}^{l}$ cancels with $f_{y}^{n}$ from the denominator, so that the summand associated with each system $\{\tilde{q}_{l,r,j}\}_{j=0}^{l}$ in the term corresponding to $\alpha$ is
\begin{equation}
\frac{(-1)^{h+\sum_{l,r,j}j\tilde{q}_{l,r,j}}n!(h-1)!f_{x}^{\sum_{l,r,j}j\tilde{q}_{l,r,j}}}{f_{y}^{h+\sum_{l,r,j}j\tilde{q}_{l,r,j}}}\prod_{l+r\geq2}\prod_{j=0}^{l}\frac{f_{x^{l-j}y^{r+j}}^{\tilde{q}_{l,r,j}}}
{\tilde{q}_{l,r,j}!(r!j!(l-j)!)^{\tilde{q}_{l,r,j}}}, \label{powmlrexp}
\end{equation}
where each occurrence of $\sum_{l,r,j}$ means $\sum_{l+r\geq2}\sum_{j=0}^{l}$ (we have also expanded the power $m_{l,r}=\sum_{j=0}^{l}\tilde{q}_{l,r,j}$ of $r!$). Since the parameters $m_{l,r}$ do not appear explicitly in Equation \eqref{powmlrexp}, the total contributions arising from all the elements of $A_{n,h}$ is the sum of the terms appearing in that equation over the set of those numbers $\{\tilde{q}_{l,r,j}\}_{l+r\geq2,0 \leq j \leq l}$ satisfying $\sum_{l,r,j}l\tilde{q}_{l,r,j}=n$, $\sum_{l,r,j}\tilde{q}_{l,r,j}=h$, and $\sum_{l,r,j}r\tilde{q}_{l,r,j}=h-1$.

We now separate the set of systems of numbers $\{\tilde{q}_{l,r,j}\}_{l,r,j}$ according to the value of the sum $s_{1,0}=\sum_{l,r,j}j\tilde{q}_{l,r,j}$, which lies between 0 and $\sum_{l,r,j}l\tilde{q}_{l,r,j}=n$, and take the external sum over $s_{1,0}$. Moreover, for the total formula we have to sum over $1 \leq h \leq n-1$ as well. In addition, we introduce the indices $p=l-j$ and $t=r+j$, so that $p+t=l+r\geq2$, the index $j$ satisfies $0 \leq j \leq t$, and we denote $\tilde{q}_{l,r,j}=\tilde{q}_{p+j,t-k,j}$ by simply $q_{p,t,j}$. The sums over $l$, $r$, and $j$ are the same as those over $p$, $t$, and $j$ (with $\sum_{p,t,j}$ similarly meaning $\sum_{p+t\geq2}\sum_{j=0}^{t}$), and the equalities appearing above for the numbers $q_{p,t,j}$ now become $\sum_{p,t,j}pq_{p,t,j}=n-s_{1,0}$, $\sum_{p,t,j}q_{p,t,j}=h$, and $\sum_{p,t,j}tq_{p,t,j}=h+s_{1,0}-1$. When we write $k=h+s_{1,0}$, which satisfies $1 \leq k\leq2n-1$, Equation \eqref{powmlrexp} for a given system of numbers $\{q_{p,t,j}\}_{p,t,j}$ satisfying these equalities takes the form \[\frac{(-1)^{k}n!(k-s_{1,0}-1)!f_{x}^{s_{1,0}}}{f_{y}^{k}}\prod_{p+t\geq2}\frac{f_{x^{p}y^{t}}^{\sum_{j=0}^{t}q_{p,t,j}}}{(p!t!)^{\sum_{j=0}^{t}q_{p,t,j}}}\prod_{j=0}^{t}\frac{1}{q_{p,t,j}!}\binom{t}{j}^{q_{p,t,j}}.\] For every $p$ and $t$ we set $s_{p,t}=\sum_{j=0}^{t}q_{p,t,j}$, and by adding the index $s_{1,0}$ from before the resulting equalities show that $\{s_{p,t}\}_{p,t}$ is an element of $B_{n,k}$. As the set of systems that contribute to a given element $\gamma \in B_{n,k}$ is precisely $Z_{\gamma}$, and the power of $p!t!$ is 1 for $p=1$ and $t=0$, this proves the lemma.
\end{proof}

Completing the comparison requires the following claim.
\begin{prop}
Let $\{s_{p,t}\}_{p+t\geq2}$ be a set of non-negative integers, only finitely many of which are nonzero and such that $\sum_{p+t\geq2}ts_{p,t}=k-1$, and let $s_{1,0}$ be any integer between 0 and $k-1$. If $\gamma$ denotes the full set $\{s_{p,t}\}_{p,t}$ then the sum $\sum_{\{q_{p,t,j}\}_{p,t,j} \in Z_{\gamma}}\prod_{p+t\geq2}s_{p,t}!\prod_{j=0}^{t}\binom{t}{j}^{q_{p,t,j}}\big/q_{p,t,j}!$ equals $\binom{k-1}{s_{1,0}}$. \label{compJohnson}
\end{prop}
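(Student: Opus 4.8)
The plan is to recognize the asserted sum as a coefficient extraction from a product of binomial series. I would introduce a formal variable $X$ whose exponent records the weighted quantity $\sum_{p,t,j} j q_{p,t,j}$; then the constraint $\sum_{p+t\geq2}\sum_{j=0}^{t} j q_{p,t,j}=s_{1,0}$ built into the definition of $Z_{\gamma}$ becomes the instruction ``take the coefficient of $X^{s_{1,0}}$,'' while the constraints $\sum_{j=0}^{t}q_{p,t,j}=s_{p,t}$ for each fixed $(p,t)$ are local and uncoupled.

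First I would fix a pair $(p,t)$ with $p+t\geq2$ and apply the Multinomial Theorem to the quantities attached to that pair:
\[
\sum_{q_{p,t,0}+\cdots+q_{p,t,t}=s_{p,t}}\frac{s_{p,t}!}{\prod_{j=0}^{t}q_{p,t,j}!}\prod_{j=0}^{t}\Bigl(\binom{t}{j}X^{j}\Bigr)^{q_{p,t,j}}=\Bigl(\sum_{j=0}^{t}\binom{t}{j}X^{j}\Bigr)^{s_{p,t}}=(1+X)^{t s_{p,t}},
\]
the last equality being the ordinary Binomial Theorem. Since the only equation linking the different pairs $(p,t)$ in the description of $Z_{\gamma}$ is the single weighted-sum constraint, which is precisely what the exponent of $X$ tracks, summing the displayed expression over all of $Z_{\gamma}$ is the same as forming the product of these per-pair generating functions and reading off $[X^{s_{1,0}}]$.

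Next I would multiply over all pairs $(p,t)$ with $p+t\geq2$ and invoke the hypothesis $\sum_{p+t\geq2}ts_{p,t}=k-1$:
\[
\prod_{p+t\geq2}(1+X)^{t s_{p,t}}=(1+X)^{\sum_{p+t\geq2}t s_{p,t}}=(1+X)^{k-1}.
\]
The sum in question is therefore $[X^{s_{1,0}}](1+X)^{k-1}=\binom{k-1}{s_{1,0}}$, which is exactly the claim; this also explains the admissible range, since $(1+X)^{k-1}$ is supported precisely on the exponents $0\le s_{1,0}\le k-1$.

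I do not expect a genuine obstacle here: the entire content is the observation that $\sum_{j=0}^{t}\binom{t}{j}X^{j}=(1+X)^{t}$ converts each local multinomial sum into a power of $1+X$, whereupon the global product collapses under the exponent hypothesis. The one point deserving a sentence of care is the bookkeeping that the independent local choices of $\{q_{p,t,\bullet}\}$, constrained only by the single coupling equation recorded by $X$, reproduce exactly the sum over $Z_{\gamma}$ — that is, that a product of series, expanded and then restricted to a fixed total $X$-degree, equals the sum over $Z_{\gamma}$ of the corresponding monomial coefficients. This is the standard Cauchy-product interpretation and needs only to be stated, not belabored.
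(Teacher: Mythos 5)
Your proof is correct, but it is genuinely different from the one in the paper. The paper remarks that ``an algebraic proof \ldots seems difficult in general'' and instead argues by double counting: it interprets $\binom{k-1}{s_{1,0}}$ as the number of ways to choose $s_{1,0}$ balls out of $k-1$ balls that are pre-sorted into $s_{p,t}$ boxes of $t$ balls each, and then classifies each such choice by the data $q_{p,t,j}$ recording how many boxes of type $(p,t)$ contribute exactly $j$ chosen balls; the summand $\prod_{p,t}\bigl[s_{p,t}!/\prod_{j}q_{p,t,j}!\bigr]\prod_{j}\binom{t}{j}^{q_{p,t,j}}$ is precisely the number of choices with that signature. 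Your argument is the generating-function avatar of the same count: the local multinomial identity $\sum_{\sum_j q_{p,t,j}=s_{p,t}}\frac{s_{p,t}!}{\prod_j q_{p,t,j}!}\prod_j\bigl(\binom{t}{j}X^{j}\bigr)^{q_{p,t,j}}=(1+X)^{ts_{p,t}}$ packages the per-box choices, the product over $(p,t)$ collapses to $(1+X)^{k-1}$ by the hypothesis $\sum_{p+t\geq2}ts_{p,t}=k-1$, and extracting $[X^{s_{1,0}}]$ implements the single coupling constraint defining $Z_{\gamma}$. Both are complete; yours is shorter, entirely mechanical, shows in passing why $0\le s_{1,0}\le k-1$ is the right range, and in fact supplies the algebraic proof the paper says it lacks, while the paper's version makes the combinatorial meaning of $Z_{\gamma}$ visible and matches the ball-and-box language used for the coefficients $C_{\alpha}$ and $D_{\gamma}$ elsewhere in the text. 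The only bookkeeping point in your write-up --- that the product of the local series, restricted to total $X$-degree $s_{1,0}$, enumerates exactly $Z_{\gamma}$ with the correct monomial coefficients --- is stated and is indeed the standard Cauchy-product observation, so no gap remains.
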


An algebraic proof of Proposition \ref{compJohnson} seems difficult in general, but we can give a combinatorial one.
\begin{proof}
Assume that $k-1$ balls are given, out of which $ts_{p,t}$ carry the indices $p$ and $t$ wherever $p+t\geq2$ (so that the sum $\sum_{p,t}ts_{p,t}$ is indeed $k-1$), and they are held in $s_{p,t}$ boxes of $t$ balls each. The number of possible ways to select $s_{1,0}$ balls in total is classically known to be $\binom{k-1}{s_{1,0}}$. On the other hand, given such a choice and indices $p$ and $t$, consider the number $q_{p,t,j}$ of boxes with those indices from which precisely $j$ balls are chosen. These are meaningful only for $0 \leq j \leq t$ (since there are $t$ balls in every such box), and the sum $\sum_{j=0}^{t}q_{p,t,j}$ equals the number $s_{p,t}$ of such boxes. With these parameters there are $s_{p,t}!\big/\prod_{j=0}^{t}q_{p,t,j}!$ possibilities for deciding how many balls are taken from each of the $s_{p,t}$ boxes, and once this is determined, taking $j$ balls from each of the $q_{p,t,j}$ boxes can be done in $\binom{t}{j}^{q_{p,t,j}}$ ways (and we take the product over $0 \leq j \leq t$). Since the total number of balls is $s_{1,0}$ we also get the equality $\sum_{p,t,j}jq_{p,t,j}=s_{1,0}$, so that $\{q_{p,t,j}\}_{p,t,j}$ is indeed an element of the appropriate set $Z_{\gamma}$, and the contribution of that element to $\binom{k-1}{s_{1,0}}$ is the asserted one. Since every element of $Z_{\gamma}$ contributes in this way, and we have seen that these are the only ones, this proves the proposition.
\end{proof}

In total, Theorem \ref{coeffval}, Lemma \ref{expDeltalfyr}, and Proposition \ref{compJohnson} prove Theorem \ref{claselts}. Indeed, when we consider the coefficient associated with $\gamma=\{s_{p,t}\}_{p,t} \in B_{n,k}$ in Lemma \ref{expDeltalfyr}, we get $(-1)^{k}n!$ from the numerator and the powers of $p!$ and $t!$ from the denominator of Equation \eqref{dgamma}, and after extending our definition of $Z_{\gamma}$ for any $\gamma=\{s_{p,t}\}_{p,t}$ with non-negative $s_{p,t}$'s only finitely many of which are non-zero (but still with $s_{0,0}=s_{0,1}=0$), Proposition \ref{compJohnson} shows that the remainder of the coefficient is the one required for obtaining $D_{\gamma}$. Hence our results agree with \cite{[Wi]} and \cite{[J3]}. Moreover, the terms appearing in Theorem \ref{claselts} all involve $n$ differentiations with respect to $x$ in total, and the power of $f_{y}$ in the denominator is always 1 more than the total number of differentiations with respect to $y$, as \cite{[N]} predicts. This is also visible in our Proposition \ref{formnoden} or Corollary \ref{formwden}, when one observes in Definition \ref{Deltalg} that every term in $\Delta_{l}f_{y^{r}}$ contains $l$ differentiations with respect to $x$ and $l+r$ differentiations with respect to $y$.

\noindent\textsc{Einstein Institute of Mathematics, the Hebrew University of Jerusalem, Edmund Safra Campus, Jerusalem 91904, Israel}

\noindent E-mail address: zemels@math.huji.ac.il

\end{document}